\numberwithin{equation}{section}
\numberwithin{figure}{section}
\begin{document}
\global\long\def\Re{\mathfrak{Re}}

\global\long\def\Im{\mathfrak{Im}}

\global\long\def\One{{\bf 1 }}

\global\long\def\ZZ{\mathbb{Z}}

\global\long\def\NN{\mathbb{N}}

\global\long\def\RR{\mathbb{R}}

\global\long\def\PP{\mathbb{P}}

\global\long\def\BB{\mathcal{B}}

\global\long\def\O{\mathrm{\mathtt{\mathsf{\mathbf{\mathrm{O}}}}}}

\global\long\def\o{\mathrm{\mathtt{\mathrm{o}}}}

\global\long\def\DD{\mathcal{D}}

\global\long\def\deq{\overset{d}{=}}

\title{Weak invariance principle for the local times of partial sums of
Markov Chains}

\author{Michael Bromberg and Zemer Kosloff}

\institute{School of Mathematical Sciences, Tel Aviv University. Tel Aviv 69978,
Israel.}
\maketitle
\begin{abstract}
Let $\left\{ X_{n}\right\} $ be an integer valued Markov Chain with
finite state space. Let $S_{n}=\sum_{k=0}^{n}X_{k}$ and let $L_{n}\left(x\right)$
be the number of times $S_{k}$ hits $x\in\ZZ$ up to step $n.$ Define
the normalized local time process $l_{n}\left(t,x\right)$ by 
\[
l_{n}\left(t,x\right)=\frac{L_{\left\lfloor nt\right\rfloor }\left(\left\lfloor \sqrt{n}x\right\rfloor \right)}{\sqrt{n}},\,\, x\in\RR.
\]
 The subject of this paper is to prove a functional, weak invariance
principle for the normalized sequence $l{}_{n}\left(t,x\right),$i.e.
we prove under the assumption of strong aperiodicity of the Markov
Chain that the normalized local times converge in distribution to
the local time of the Brownian Motion. 
\end{abstract}

\section{\label{sub:General-Definitions}Introduction.}

Let $S\subseteq\ZZ$ be a finite set, $P:S\times S\rightarrow[0,1]$
an irreducible, aperiodic stochastic matrix. Let $\mu$ be some distribution
on $S$ and let $\left\{ X_{i}\right\} _{i=0}^{\infty}$ be a Markov
Chain generated by $P$ and the initial distribution $\mu$, i.e.
$X_{i},\, i=0,1,2,...$ are random variables defined on a probability
space $\left(X,\BB,P^{\mu}\right)$, taking values in $S$, such that
the equality

\[
\PP^{\mu}\left(X_{i_{1}}=s_{1},\, X_{i_{2}}=s_{2},...,\, X_{i_{n}}=s_{n}\right)=\sum_{s_{0}\in S}\mu_{s_{0}}\cdot P^{i_{1}}\left(s_{0},s_{1}\right)\cdot P^{i_{2}-i_{1}}\left(s_{1},s_{2}\right)...\cdot P^{i_{n}-i_{n-1}}\left(s_{n-1},s_{n}\right),
\]
holds with $\mu_{s}=\mu\left\{ s\right\} .$ Under these assumptions,
$P$ has a unique $P$-invariant probability distribution vector $\nu$,
in the sense that the equality $\nu P=\nu$ holds. 

We may assume that for every initial distribution $\mu$, $X=S^{\NN},$
$\BB$ is the Borel $\sigma$-field generated by cylinders of the
type $\left[s_{0}s_{1}...s_{n}\right]=\left\{ \omega\in S^{\NN}:\,\omega_{i}=s_{i},\, i=0,1,...,n\right\} $
and $X_{i}\left(\omega\right)=\omega_{i}.$ 

Set 
\[
S_{n}=\sum\limits _{k=0}^{n}X_{k},\ W_{n}(t)=\frac{1}{\sqrt{n}}S_{\left\lfloor nt\right\rfloor },t\in[0,1].
\]

Let $D_{[s,t]}$, $D_{+}$ and $D$ denote the spaces of functions
defined on $\left[s,t\right]$, $\left[0,\infty\right)$ and $\RR$
respectively, that are continuous on the right with left limits at
each point (Cadlag functions). $D_{\left[s,t\right]}$, $D_{+}$ and
$D$ endowed with the Skorokhod $J_{1}$ topology and its suitable
generalization to $\left[0,\infty\right)$ and $\RR$, constitute
Polish (complete and separable metrizable) spaces (see \cite{Bil}). 

Under the assumption that $E^{\nu}\left(X_{0}\right)=0$, the weak
invariance principle for the sequence $W_{n}$ holds for any initial
distribution $\mu,$ i.e. the process $W_{n}\left(t\right)$ regarded
as a sequence of random variables defined on $(X,\BB,P^{\mu})$ and
taking values in $D_{+}$ converges in distribution to the Brownian
motion, henceforth denoted by $W_{\sigma}\left(t\right),$ where 
\[
\sigma^{2}=Var\left(W_{\sigma}\left(1\right)\right)=\lim\limits _{n\rightarrow\infty}\frac{E^{\nu}\left(S_{n}^{2}\right)}{n}
\]
 is the asymptotic variance of the process $S_{n}$ with respect to
the initial distribution $\nu.$ Now, $\sigma^{2}=0$ if and only
if $X$ is a coboundary, meaning that there exists a square integrable
process $Y=\left\{ Y_{n}\right\} $ such that (see \cite[Lemma 6]{R-E}
or \cite[II.3 Thm.A]{HeH}) 
\[
X_{n}=Y_{n}-Y_{n+1}.
\]

In this case the functional limit of $W_{n}$ is degenerate, so we
restrict our attention to the case when $\sigma^{2}>0$. The invariance
principle may be proved by techniques used in this article, or otherwise,
see \cite{Bil}.

Let $L_{n}(x)=\#\left\{ k\leq n:S_{k}=x\right\} =\sum_{k=0}^{n}\One_{\left\{ x\right\} }\left(S_{k}\right)$
denote the number of arrivals of the process $\left\{ S_{k}\right\} _{k\in\NN}$
at the point $x$ after $n$ steps, and let $l_{n}(t,x)=\frac{1}{\sqrt{n}}L_{\left[nt\right]}\left(\left[\sqrt{n}x\right]\right)$,
$\left(t,x\right)\in\left[0,\infty\right]\times\RR$. $l_{n}\left(t,x\right)$
is the amount of time multiplied by $\sqrt{n}$ that the process $W_{n}\left(\cdot\right)$
spends up to the time instant $t$ at the point $\frac{\left[\sqrt{n}x\right]}{\sqrt{n}}$
. 

Let $l\left(t,x\right)$ be the local time of the Brownian motion
$W_{\sigma}\left(\cdot\right)$ at time $t$, i.e. $l\left(\cdot,\cdot\right)$
is a random function satisfying the equality 
\[
\int\limits _{A}l\left(t,x\right)dx=\int_{0}^{t}\One_{A}\left(W_{\sigma}\left(t\right)\right)dt
\]
 for every Borel set $A\in\BB_{\RR}$ and $t\in\left[0,\infty\right]$,
$\PP$ - a.s where $\PP$ is the Wiener measure on $C(\RR)$, the
space of continuous functions on $\RR$ (see \cite{MP}). 

Note, that for every $t\geq0$, the sequence $l_{n}\left(t,x\right)$
defines a $D$ valued process on $X$. Since, $l(t,\cdot)$ is a.s.
continuous on $\RR$, we may also regard it as a $D$ valued random
variable.

In this paper, we prove that under the assumption that the Markov
chain is strongly aperiodic (see definition \ref{def:Aperiodicity})
, the weak invariance principle for $W_{n}\left(t\right)$ implies
an invariance principle for the local times, i.e. the convergence
in distribution of $W_{n}$ to $W_{\sigma}$ implies that $l_{n}\left(t,x\right)$
converges to $l\left(t,x\right)$ (here, convergence is in the sense
that there exists a probability space where $l_{n}\left(t,x\right)$
converges to $l\left(t,x\right)$ uniformly on compact sets). The
case when $X_{i}$'s are i.i.d's was treated by Borodin \cite{Bor},
under the assumption that for every $t\notin2\pi\ZZ$, $Ee^{itX_{1}}\neq1$,
an assumption which we refer to as non-arithmeticity. We also show
that the assumption of strong aperiodicity may be exchanged for the
weaker assumption of non-arithmeticity in the i.i.d case, and for
a certain class of Markov chains (see the remark following Definition
\ref{def:Aperiodicity}).

\subsubsection*{Conventions and Notations: }
\begin{itemize}
\item $\PP^{x}$ denotes the measure $\PP^{\mu}$, where $\mu$ assigns
mass $1$ to the point $x\in S$. 
\item $E^{\mu}$ denotes integrals on $S$ with respect to the measure $\PP^{\mu}$
and by $E^{x}$ integrals with respect to the measure $\PP^{x}$. 
\item For two random variables $X$ and $Y$, $X\deq Y$ means that $X$
and $Y$ are equally distributed. Also $X_{n}\overset{d}{\Rightarrow}X$
means that $X_{n}$ converges in distribution to $X$. 
\item $\mathbb{C}^{S}$ denotes the space of complex valued functions on
$S$, which is isomorphic to $\mathbb{C}^{n}$ with $n=\left|S\right|$. 
\item $\mathcal{L_{S}}$ denotes the space of linear operators from $\mathbb{C}^{S}$
into itself, and we denote by $\left\Vert \cdot\right\Vert $ the
operator norm on this space. 
\item Throughout the paper we assume that $P$ is aperiodic and irreducible,
$\nu$ always denotes the stationary distribution with respect to
$P$. 
\end{itemize}

\section{Statement of the Main Theorem.}

Before stating the main theorem we introduce the notion of strong
aperiodicity (see \cite{GH}):
\begin{definition}
\textit{\label{def:Aperiodicity}A function $f:S\times S\rightarrow\mathbb{Z}$
is said to be aperiodic if the only solutions for }

\textit{
\[
e^{itf\left(x,y\right)}=\lambda\frac{\varphi\left(x\right)}{\varphi\left(y\right)},\quad\forall x,y\in S,\; such\: that\:\mathbb{P}^{x}\left(y\right)>0
\]
}

\textit{with $t\in\RR$, $\left|\lambda\right|=1$, $\varphi\in\mathbb{C}^{S}$,
$\left|\varphi\right|\equiv1$ are $t\in2\pi\ZZ,$ $\varphi\equiv const$.
$f$ is is said to be periodic if it is not aperiodic.}

\textit{Since, we are only interested in the case $f\left(x,y\right)=y$,
which gives $f\left(X_{n-1},X_{n}\right)=X_{n}$, we say that the
Markov Chain $\left\{ X_{n}\right\} $ is strongly aperiodic iff the
function $f\left(x,y\right)=y$ is aperiodic. }
\end{definition}
The assumption of strong aperiodicity may be dropped in case $\left\{ X_{i}\right\} $
is a sequence of i.i.d's and may be dropped in the Markov case provided
that the underlying Markov shift is almost onto (see section \ref{sec:The-Periodic-Case}).
Note that strong aperiodicity of the Markov Chain implies aperiodicity
of the stochastic matrix $P$, while the reverse implication is generally
false - a simple random walk where $X_{n}$ equals $\pm1$ with equal
probability may serve as a counter example. Here for $t=\pi,\lambda=-1$
and $\varphi\equiv1$, 
\[
e^{i\pi X_{n}}=-1,
\]
hence $X_{n}$ is not strongly aperiodic. However 
\[
P=\left(\begin{array}{cc}
\frac{1}{2} & \frac{1}{2}\\
\frac{1}{2} & \frac{1}{2}
\end{array}\right)
\]
is aperiodic. For discussion of condition \ref{def:Aperiodicity}
in a more general setting, see \cite{AD}. 

\begin{theorem}
\label{thm: Borodin for MC}Let $\left\{ X_{n}\right\} $ be an irreducible,
strongly aperiodic, finite state Markov chain. Assume that $E^{\nu}\left(X_{0}\right)=0$\textup{
}and 
\[
W_{n}\overset{d}{\Rightarrow}W_{\sigma},\ \sigma>0.
\]
Then there exists a probability space $\left(X',\BB',\PP'\right)$
and processes $W'_{n},\, W'_{\sigma}:X\rightarrow D_{+}$ such that:\end{theorem}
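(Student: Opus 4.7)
The plan is to combine a sharp local limit theorem for the Markov chain with moment estimates on the increments of the local time, then upgrade finite-dimensional convergence to the uniform joint convergence on compact sets via Skorokhod's representation theorem. The key point enabling the Markov extension of Borodin's i.i.d.~argument is that strong aperiodicity, in the sense of Definition \ref{def:Aperiodicity}, implies that the Fourier-perturbed transfer operators $P_t\varphi(x)=\sum_y P(x,y)e^{ity}\varphi(y)$ satisfy $\|P_t^n\|\to 0$ exponentially for every $t\in(0,2\pi)\setminus\{0\}$ (in operator norm on $\mathbb{C}^S$), while for $t$ near $0$ there is a unique leading simple eigenvalue $\lambda(t)=1-\tfrac12\sigma^2 t^2+o(t^2)$. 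From this I would first derive a local limit theorem of the form
\[
\sqrt{n}\,\PP^x(S_n=k)=\frac{1}{\sqrt{2\pi}\sigma}\,e^{-k^2/(2n\sigma^2)}+o(1),
\]
uniformly in $k\in\ZZ$ and in the starting point $x\in S$, by splitting the Fourier inversion into a neighborhood of $0$ (handled by the spectral expansion) and its complement (handled by the uniform spectral gap).

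The second and main step is to use this local limit theorem to obtain moment estimates on the increments of $l_n$. Writing
\[
\left(l_n(t_2,x_2)-l_n(t_1,x_1)\right)^{2p}=\frac{1}{n^{p}}\sum_{k_1,\dots,k_{2p}}\prod_{j=1}^{2p}\left(\One_{[\sqrt{n}x_2]}(S_{k_j})\One_{[nt_1]<k_j\le [nt_2]}-\One_{[\sqrt{n}x_1]}(S_{k_j})\cdots\right),
\]
expanding, ordering the indices and using the Markov property together with the LLT on each increment, I would obtain estimates of the Kolmogorov--Chentsov type
\[
E^\mu\left|l_n(t_2,x_2)-l_n(t_1,x_1)\right|^{2p}\le C_p\bigl(|t_2-t_1|+|x_2-x_1|\bigr)^{p},
\]
together with an analogous bound controlling starting behaviour. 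This is exactly the heart of the matter and I expect it to be the main obstacle: one must keep uniform control over all starting states, handle the discrepancy between $[\sqrt n x]/\sqrt n$ and $x$, and absorb the boundary terms produced by the two time endpoints. The Markov dependence forces the LLT to be applied with the current transition operator rather than a fixed law, which is precisely where strong aperiodicity is crucial.

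These moment bounds give tightness of $\{l_n\}$ in $C([0,T]\times[-M,M])$ for every $T,M$, by the standard Kolmogorov criterion, and simultaneously tightness of $(W_n,l_n)$ in $D_+\times C([0,T]\times[-M,M])$. To identify the limit, I would compute joint moments
\[
E^\mu\!\left[\prod_{i=1}^{r}l_n(t_i,x_i)\right]=\frac{1}{n^{r/2}}\sum_{k_1,\dots,k_r}\PP^\mu(S_{k_1}=[\sqrt n x_1],\dots,S_{k_r}=[\sqrt n x_r])
\]
and, using the LLT factored along the ordered indices, show that they converge to the corresponding moments of the Brownian local time $l(t_i,x_i)$; combined with joint convergence with $W_n$, this forces $l_n\to l$ in law jointly with $W_n\to W_\sigma$. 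The characterization of local time as the occupation density of Brownian motion then identifies the limit on any subsequence.

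Finally, since $D_+\times C([0,T]\times[-M,M])$ is Polish, Skorokhod's representation theorem provides a probability space $(X',\BB',\PP')$ carrying $W'_n, l'_n, W'_\sigma, l'$ with the same laws as the originals and with $W'_n\to W'_\sigma$ in the $J_1$ topology and $l'_n\to l'$ uniformly on every compact subset of $[0,\infty)\times\RR$, $\PP'$-almost surely; taking an exhaustion in $T$ and $M$ and passing to a diagonal subsequence gives the statement of Theorem \ref{thm: Borodin for MC}. Throughout I would use the conventions $\PP^x$, $E^x$ and $\nu$ as fixed in the introduction, and I would treat the dependence on the initial distribution $\mu$ by averaging the LLT against $\mu$ at the cost of an additive error controlled by the spectral gap of $P$.
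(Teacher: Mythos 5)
Your broad outline (spectral gap giving the LLT, moment estimates, tightness, identification, Skorokhod) matches the paper, but the middle steps take a genuinely different route, and two of them have concrete gaps.

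The paper does \emph{not} attempt a joint space--time moment estimate or two-dimensional Kolmogorov--Chentsov. Instead, it proves only the fourth-moment bound $E^\mu\bigl((L_n(x)-L_n(y))^4\bigr)\le Cn|x-y|^2$ (Lemma \ref{cor: Fourth moments inequality}), deduces tightness in $D$ of $x\mapsto l_n(t,x)$ for each \emph{fixed} time (Proposition \ref{pro:The-sequence is tight}), identifies the one-time marginal $t_\sigma$ via the occupation-measure functional $f\mapsto\int_a^b f\,dx$ rather than via moments (Proposition \ref{pro:Identification ot the limit}), and then upgrades to uniformity in $t$ by forming the vector $\bigl(W_n,l_n(t_1,\cdot),l_n(t_2,\cdot),\dots\bigr)$ over a countable dense set of times and exploiting \emph{monotonicity of $t\mapsto l_n(t,x)$} to pass from a finite mesh in time to the full supremum. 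This monotonicity device is precisely what allows the paper to get away with only a fourth-moment estimate in $x$ and no moment estimate in $t$ at all.

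Your approach has two obstructions as written. First, $l_n(t,x)$ is piecewise constant in both $t$ and $x$ (it jumps at multiples of $1/\sqrt n$ in $x$ and $1/n$ in $t$), so it does not belong to $C([0,T]\times[-M,M])$, and the Kolmogorov--Chentsov tightness criterion in $C$ cannot be applied directly; you would have to either interpolate or work with a criterion adapted to $D$-valued fields, neither of which you address. Second, and more seriously, the exponent you write, $E^\mu|l_n(t_2,x_2)-l_n(t_1,x_1)|^{2p}\le C_p(|t_2-t_1|+|x_2-x_1|)^p$, requires $p>2$ (exponent strictly above the two-dimensional parameter space) to give tightness of the field; so the $p=2$ (fourth-moment) estimate, which is what the paper actually establishes and what is within easy reach of the potential kernel bounds, is \emph{not} sufficient for your route. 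You would need to push the argument to sixth or eighth moments of the local time increments, uniformly in the starting state and including the time increments, which is substantially harder than anything in the paper; you acknowledge this as ``the main obstacle'' but supply no argument for it. The paper's design of proof --- tightness in $x$ only, monotonicity in $t$, occupation measures for identification --- is specifically chosen to avoid exactly this difficulty, and that observation is worth internalizing.
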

\begin{enumerate}
\item \textit{$W_{n}\overset{d}{=}W'_{n};\, W_{\sigma}\overset{d}{=}W'_{\sigma}$.}
\item \textit{With probability one $W'_{n}$ converges to $W'_{\sigma}$
uniformly on compact subsets of $\left[0,\infty\right)$.}
\item \textit{For every $\epsilon,T>0$ the processes $l'_{n}\left(t,x\right)$
and $l'\left(t,x\right)$ defined with respect to $W'_{n}$ and $W'_{\sigma}$
satisfy the relationship:
\begin{equation}
\lim_{n\rightarrow\infty}\PP'\left(\sup_{t\in\left[0,T\right],\, x\in\RR}\left|l'_{n}\left(t,x\right)-l'\left(t,x\right)\right|>\epsilon\right)=0.\label{eq: Uniform convergence on compacts for local time}
\end{equation}
}
\end{enumerate}
We start by fixing the time variable at $1$, and for convenience
denote: $t_{n}\left(x\right)\equiv l_{n}\left(1,x\right)$ and $t\left(x\right)\equiv l\left(1,x\right).$
The main step on the road to proving the previous theorem is to prove:
\begin{theorem}
\label{thm: Main Theorem}Let $\left\{ X_{n}\right\} $ be a finite
state Markov chain, $E^{\nu}\left(X_{0}\right)=0$ and 
\[
W_{n}\overset{d}{\Rightarrow}W_{\sigma},\ \sigma>0,
\]
then 
\[
t_{n}\overset{d}{\Rightarrow}t_{\sigma}.
\]

\end{theorem}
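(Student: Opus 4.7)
I would split the convergence $t_n \overset{d}{\Rightarrow} t_\sigma$ in $D(\RR)$ into two pieces: (a) tightness of the sequence $\left\{ t_n \right\}$ in $D(\RR)$, and (b) identification of every subsequential weak limit with $t_\sigma$. The engine behind both pieces is a uniform local limit theorem for $S_n$, which I would derive from a Nagaev--Guivarc'h spectral analysis of the family of perturbed transfer operators $P_t \in \mathcal{L}_S$ defined by $(P_t f)(x) = \sum_y P(x,y)\, e^{ity} f(y)$. Under strong aperiodicity, the spectral radius of $P_t$ is strictly less than $1$ for $t \in (0,2\pi)$, while near $t=0$ one has a dominant simple eigenvalue $\lambda(t) = 1 - \tfrac{\sigma^{2}}{2}t^{2} + \o(t^2)$. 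Inverting the Fourier representation of $\PP^\mu(S_n = z)$ then yields both a uniform tail bound $\PP^\mu(S_n = z) \leq C/\sqrt{n}$ and the Gaussian asymptotic $\sqrt{n}\, \PP^\mu(S_n = z) \to \sigma^{-1}\varphi_\sigma(z/\sqrt{n})$, uniformly in $z$ and in the initial distribution $\mu$.

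Tightness of $\left\{ t_n \right\}$ I would establish via a Kolmogorov--Chentsov-type fourth moment estimate
\[
E^\mu \bigl| t_n(x) - t_n(y) \bigr|^4 \leq C \bigl( |x-y| + n^{-1/2} \bigr)^2,
\]
obtained by expanding $(t_n(x) - t_n(y))^4$ as a quadruple sum of indicators $\One_{\{S_k = \cdot\}}$, time-ordering the four indices, and applying the Markov property so that each summand factors as a chain of four transition probabilities; the uniform LLT bound $\PP^s(S_j = z) \leq C/\sqrt{j}$ then carries out the summation. Tightness at spatial infinity reduces to the tail of $\sup_{s \leq 1}|W_\sigma(s)|$ via the continuous mapping theorem applied to the supremum functional, since $t_n(x)=0$ whenever $|x| > n^{-1/2} \max_{k \leq n}|S_k|$.

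For the identification step I would invoke Skorokhod's representation theorem to realize $W_n \to W_\sigma$ uniformly on $[0,1]$ almost surely on a common probability space, and pass along a tight subsequence to an almost sure limit $\hat{t}$ of $t_n$ in $D(\RR)$. For every compactly supported $C^1$ function $f$ the discrete occupation identity
\[
\int_\RR f(x)\, t_n(x)\, dx \;=\; \frac{1}{n} \sum_{k=0}^{n} f\!\left( \frac{S_k}{\sqrt{n}} \right) + \O\!\left( \frac{\|f'\|_\infty}{\sqrt{n}} \right) \;=\; \int_0^1 f(W_n(s))\, ds + \O(n^{-1/2})
\]
converges almost surely to $\int_0^1 f(W_\sigma(s))\, ds = \int_\RR f(x)\, t_\sigma(x)\, dx$, by continuity of the occupation functional on $C[0,1]$. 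Hence $\hat{t}(x)\, dx = t_\sigma(x)\, dx$ as measures; since $t_\sigma$ is almost surely continuous while $\hat{t}$ is cadlag, the two agree as cadlag functions, so every subsequential limit equals $t_\sigma$ in distribution, proving the theorem.

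The technical heart, and the main obstacle, is the fourth moment bound. In the i.i.d.\ setting treated by Borodin \cite{Bor} the inner sums simplify through the convolution structure of the one-step density; in the Markov case the summands couple through the intermediate entry states of the chain, and one must peel off the Markov property carefully so the LLT bound can be applied to each of the four increments separately, uniformly in the state at which it is started. Matching the resulting nested combinatorial sums to the $|x-y|^2$ scaling, and ensuring all estimates remain uniform in the initial distribution $\mu$, is where the bulk of the work lies.
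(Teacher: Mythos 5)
Your overall architecture mirrors the paper's closely: the Nagaev--Guivarc'h spectral analysis of the perturbed transfer operator $Q(t)=P_t$, the uniform local limit bound $\PP^\mu(S_n=z)\le C/\sqrt{n}$, a fourth-moment Kolmogorov--Chentsov criterion for tightness of $\{t_n\}$ in $D(\RR)$, and identification of the limit by matching the occupation measures $\int_0^1 f(W_n(s))\,ds$ with $\int f\,t_n$. These are exactly the paper's ingredients.

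However, there is a genuine gap in the fourth-moment estimate, and it is precisely in the place you flag as ``the bulk of the work.'' You propose to expand $(t_n(x)-t_n(y))^4$ into a time-ordered quadruple sum, factor via the Markov property into a chain of four transition probabilities, and then assert that ``the uniform LLT bound $\PP^s(S_j=z)\le C/\sqrt{j}$ then carries out the summation.'' That bound alone cannot produce the factor $|x-y|^2$: applying $C/\sqrt{j}$ to each of the four increments and summing over $0\le i_1\le\cdots\le i_4\le n$ gives a bound of order $n^2$, with no $|x-y|$ dependence at all, which is useless for tightness. The missing ingredient is a \emph{potential kernel} (Green's function difference) estimate of the form
\[
\sum_{n=0}^{\infty}\bigl|\PP^{\mu}(S_n=x,\,X_n=s)-\PP^{\mu}(S_n=y,\,X_n=s)\bigr|\le C|x-y|,
\]
which the paper derives (Lemma~\ref{pro:Potential Kernel Estimate}) from a finer spectral expansion: not just $|\lambda(t)|\le 1-at^2$, but also $|\Im\lambda(t)|\le C|t|^3$ and $\|\Pi(t)-E^\nu\|\le C|t|$. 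In the fourth-moment sum two of the four inner sums are telescoped using this difference bound (yielding $|x-y|$ each) while the other two are summed using the LLT (yielding $\sqrt{n}$ each), giving $Cn|x-y|^2$. Without this cancellation mechanism the moment bound you wrote down, $E|t_n(x)-t_n(y)|^4\le C(|x-y|+n^{-1/2})^2$, is unsubstantiated. Note also that in the Markov case the difference estimate must control the joint law of $(S_n,X_n)$, not just $S_n$, because when you peel off the Markov property the intermediate entry state appears; your sketch elides this, but it is the reason the Markov case does not reduce trivially to Borodin's i.i.d.\ convolution argument.

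The rest of your proposal --- tightness at spatial infinity via the tail of $\sup_{s\le1}|W_\sigma(s)|$, and the identification of the subsequential limit by testing against compactly supported functions and invoking continuity of Brownian local time in $x$ --- is sound and matches the paper's Propositions~\ref{pro:The-sequence is tight} and~\ref{pro:Identification ot the limit}.
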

In order to prove convergence in distribution of $\left\{ t_{n}(x)\right\} $,
we have to establish relative compactness of the family $\Pi=\left\{ t_{n}\left(x\right)\right\} _{n\in\NN},$
which by definition means that every sequence of elements from $\Pi$
has a subsequence that converges in distribution. This is done in
section \ref{sub: proof of the main theorem}. Then to complete the
proof, in section \ref{sub:Identifying the Only Possible Limit Point}
we show that $t_{\sigma}\left(x\right)$ is the only possible distributional
limit point.

\subsubsection*{Organization of the paper: }

We begin with the proof under the assumption that $X_{n}$ is strongly
aperiodic. Section \ref{sub:The Characteristic Function Operator}
contains estimates of the characteristic functions of the relevant
processes, which are later used, in section \ref{sub:Probabilistic Estimates}
to carry out the calculations needed for the proof of Theorem \ref{thm: Main Theorem}.
In sections \ref{sub: proof of the main theorem} and \ref{sec:Borodin's-Theorem.}
we prove Theorems \ref{thm: Main Theorem} and \ref{thm: Borodin for MC}
respectively under the assumption of strong aperiodicity.

In Section \ref{sec:The-Periodic-Case} we show the modifications
needed to extend these results to the periodic case. 

This work was motivated by Aaronson's work on \textit{random walks
in random sceneries} with independent jump random variables. In section
\ref{sec:Applications-to-complexity} we explain the model of RWRS
and why Theorem \ref{thm: Borodin for MC} implies Aaronson's result
for RWRS with the distribution of a strongly aperiodic Markov Chain.

\section{\label{sub:The Characteristic Function Operator}The Characteristic
Function Operator $Q(t)$}

From the irreducibility of $P$ it follows that there is a unique
probability distribution $\nu$ on $S$, such that $\nu P=\nu$. This
is equivalent to $1$ being a simple eigenvalue of $P$, with a right-hand
eigenspace of vectors in $\mathbb{C}^{S}$ having equal entries. Moreover,
from aperiodicity of $P$ it follows that all other eigenvalues of
$P$ are of modulus strictly less than $1.$ The projection to the
eigenspace belonging to $1$ is given by $\left\langle v,\cdot\right\rangle \One=E^{\nu}\left(\cdot\right)\One$
(here, $\One$ is a vector in $\mathbb{C}^{S}$ with all entries equal
to $1$). In what follows, we omit the $\One$, so that depending
on the context $E^{\nu}\left(f\right)$ may denote both a scalar,
or a vector in $\mathbb{C}^{S}$ with all entries equal to $E^{\nu}\left(f\right)$.
Hence, we can write

\[
Pf=E^{\nu}\left(f\right)+Nf,
\]
 where $N$ is an operator with spectral radius $\rho\left(N\right)$
strictly less then $1$. It follows that for every $\rho\left(N\right)<\eta<1$,
and $n$ large enough, we have 
\[
\left\Vert N^{n}\right\Vert \leq\eta^{n}.
\]

Let $Q\left(t\right):\mathbb{C}\rightarrow\mathcal{L_{S}}$ be an
operator valued function defined by 
\[
Q\left(t\right)f\left(x\right):=\int\limits _{S}e^{ity}f\left(y\right)\PP^{x}\left(dy\right)=E^{x}\left(e^{itX_{0}}f\right)=\sum_{y\in S}P\left(x,y\right)e^{ity}f\left(y\right).
\]
 Note that $Q\left(0\right)=P$. 

Since, $S$ is a finite state space, it is easy to see that the function
$Q\left(t\right)$ is holomorphic and its derivatives are given by 

\[
\left(\frac{d^{n}}{dt^{n}}Q\left(t\right)f\right)\left(x\right)=i^{n}\int y^{n}e^{ity}f\left(y\right)\,\PP^{x}\left(dy\right)=i^{n}\sum_{y\in S}y^{n}P\left(x,y\right)e^{ity}f\left(y\right).
\]

Let 
\[
\varphi_{n}^{\mu}\left(t\right)=E^{\mu}\left(e^{itS_{n}}\right),\ \varphi_{n}^{\mu}\left(t_{1},t_{2}\right)=E^{\mu}\left(e^{it_{1}S_{n}+it_{2}X_{n}}\right)
\]
 denote the characteristic functions of the processes $S_{n}$ and
$\left(S_{n},\, X_{n}\right)$ respectively, under the initial distribution
$\mu.$ Our primary interest in $Q\left(t\right)$ is due to the fact
that $\varphi_{n}^{\mu}\left(t\right)=E^{\mu}\left(Q\left(t\right)^{n}\One\right),$
and $\varphi_{n}^{\mu}\left(t_{1},t_{2}\right)=E^{\mu}\left(Q\left(t_{1}\right)^{n}e^{it_{2}s}\right).$

\subsection{Expansion of the largest eigenvalue.}

It may be shown using standard perturbation techniques (see \cite{Ka}
or \cite[ Chapter 3]{HeH}) that the largest eigenvalue of $Q\left(t\right)$
and its eigenspaces are analytic functions of $t$ in a neighborhood
of $0.$ More precisely, we have that in a neighborhood $I$ of $0$,
$Q$$\left(t\right)$ has a simple eigenvalue $\lambda\left(t\right)$
which is an analytic perturbation of $\lambda\left(0\right)$, and
there is a positive gap between $\lambda\left(t\right)$ and all other
eigenvalues of $Q\left(t\right)$, i.e. 
\[
\inf\limits _{t\in I}\min\limits _{\tilde{\lambda}\left(t\right)}\left|\lambda\left(t\right)-\tilde{\lambda}\left(t\right)\right|\geq const>0,
\]
where the maximum is taken over all remaining eigenvalues of $Q\left(t\right)$.
The projection function to the corresponding eigenspaces $\Pi\left(t\right)$
is also analytic in $t$, and we may choose the eigenfunctions $v\left(t\right)$
to be analytic perturbations of $v\left(0\right)=\One.$ 

Continuity of $Q\left(t\right)$ immediately gives us that in a neighborhood
$I$ of $0$ 

\begin{equation}
Q\left(t\right)=\lambda\left(t\right)\cdot\Pi\left(t\right)+N\left(t\right),\label{eq:Q(t) structure}
\end{equation}
where $\sup_{t\in I}\left\Vert N\left(t\right)\right\Vert =1-\eta<1$.

We proceed by evaluating the functions $\lambda\left(t\right)$ and
$\Pi\left(t\right)$ for small $t$. By the structure of $P=Q\left(0\right)$
discussed in the beginning of this section, we have 

\begin{eqnarray}
\lambda\left(0\right) & = & 1,\nonumber \\
v\left(0\right) & = & 1,\label{eq: Q for small t}\\
\Pi(0)(\cdot) & = & \left\langle \nu,\cdot\right\rangle v\left(0\right)=E^{\nu}(\cdot).\nonumber 
\end{eqnarray}

\begin{lemma}
\label{lem: Expansion of Main Eigenvalue Lemma}If $E^{\nu}\left(X_{0}\right)=0$,
then there exists a real nonnegative constant $\sigma^{2}$ such that,
\[
\lambda\left(t\right)=1-\frac{\sigma^{2}}{2}t^{2}+\O\left(\left|t\right|^{3}\right),
\]
and 
\[
\Pi\left(t\right)\left(\cdot\right)=E^{\nu}\left(\cdot\right)+\O\left(\left|t\right|\right).
\]
\end{lemma}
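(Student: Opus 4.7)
The second claim of the lemma is immediate: the analyticity of $\Pi(\cdot)$ at $t=0$ recalled just above the statement, combined with $\Pi(0)(\cdot)=E^{\nu}(\cdot)$ from \eqref{eq: Q for small t}, gives $\Pi(t)(\cdot)=E^{\nu}(\cdot)+\O(|t|)$ by Taylor's theorem. The bulk of the work is the expansion of $\lambda(t)$, for which I would run a second-order Rayleigh--Schr\"odinger style perturbation calculation around $Q(0)=P$.

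Using analyticity, I would pick an analytic eigenvector $v(t)$ with $v(0)=\One$ and, after multiplying by an analytic scalar if necessary, impose the normalization $E^{\nu}(v(t))\equiv 1$, which forces the Taylor coefficients $v_{k}$ in $v(t)=\One+tv_{1}+t^{2}v_{2}+\O(|t|^{3})$ to satisfy $E^{\nu}(v_{k})=0$ for all $k\geq 1$. Combined with
\begin{equation*}
Q(t)=P+itA_{1}-\tfrac{t^{2}}{2}A_{2}+\O(|t|^{3}),\qquad (A_{k}f)(x):=\sum_{y\in S}y^{k}P(x,y)f(y),
\end{equation*}
and $\lambda(t)=1+a_{1}t+a_{2}t^{2}+\O(|t|^{3})$, I would substitute into $Q(t)v(t)=\lambda(t)v(t)$ and match powers of $t$.

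The order-$t$ equation reads $iA_{1}\One+Pv_{1}=a_{1}\One+v_{1}$. Pairing against the left eigenvector $\nu$ of $P$, using $\nu P=\nu$ and the hypothesis $E^{\nu}(X_{0})=\sum_{y}y\,\nu(y)=0$, kills $a_{1}$ and reduces the equation to $(I-P)v_{1}=iA_{1}\One$. Since $A_{1}\One$ is real valued, $\ker(I-P)=\mathbb{C}\One$, and $E^{\nu}(v_{1})=0$, splitting into real and imaginary parts forces $v_{1}=ih$ with $h\in\RR^{S}$, $E^{\nu}(h)=0$ and $(I-P)h=A_{1}\One$. The order-$t^{2}$ equation $-\tfrac{1}{2}A_{2}\One+iA_{1}v_{1}+Pv_{2}=a_{2}\One+v_{2}$, paired once more with $\nu$, cancels the $v_{2}$ contribution and yields
\begin{equation*}
a_{2}=-\tfrac{1}{2}E^{\nu}(X_{0}^{2})-E^{\nu}(A_{1}h)\in\RR.
\end{equation*}
Set $\sigma^{2}:=-2a_{2}\in\RR$.

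For nonnegativity of $\sigma^{2}$ I would exploit the spectral decomposition \eqref{eq:Q(t) structure}: uniformly for $t\in I$,
\begin{equation*}
\varphi_{n}^{\nu}(t)=E^{\nu}(Q(t)^{n}\One)=\lambda(t)^{n}E^{\nu}(\Pi(t)\One)+\O\!\left((1-\eta)^{n}\right).
\end{equation*}
Substituting $t=s/\sqrt{n}$ and letting $n\to\infty$, the left side has modulus $\leq 1$, the prefactor $E^{\nu}(\Pi(s/\sqrt{n})\One)$ tends to $1$ by the second part of the lemma already proved, and the expansion just derived gives $\lambda(s/\sqrt{n})^{n}\to e^{-\sigma^{2}s^{2}/2}$; hence $|e^{-\sigma^{2}s^{2}/2}|\leq 1$ for every $s\in\RR$, which forces $\sigma^{2}\geq 0$. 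The arithmetic is essentially bookkeeping; the genuine subtleties I expect to deal with are (i) fixing a normalization of $v(t)$ that pins down unique Taylor coefficients, (ii) the realness check showing $v_{1}$ is purely imaginary, so that $a_{2}$ comes out real, and (iii) nonnegativity of $\sigma^{2}$, which the perturbation equation itself does not provide and which must be extracted from the global bound $|\varphi_{n}^{\nu}|\leq 1$.
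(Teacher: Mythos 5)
Your proof is correct, but note that the paper itself does not carry out this computation: it disposes of the $\Pi(t)$ statement exactly as you do (Taylor plus $\Pi(0)=E^{\nu}$) and then simply cites \cite[Lemma IV.4']{HeH} for the expansion of $\lambda(t)$. What you have written out is essentially the Rayleigh--Schr\"odinger calculation that lies behind that reference, so the underlying mathematics is the same; the difference is that your version is self-contained where the paper delegates. A few remarks on the places you flagged as subtle. The normalization $E^{\nu}(v(t))\equiv 1$ is legitimate because $E^{\nu}(v(0))=E^{\nu}(\One)=1\neq 0$, so dividing by the analytic scalar $E^{\nu}(v(t))$ is harmless near $0$. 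Your realness argument is sound: writing $v_{1}=u+iw$ with $u,w$ real, the real part of $(I-P)v_{1}=iA_{1}\One$ gives $(I-P)u=0$, hence $u\in\RR\One$, and $E^{\nu}(u)=0$ kills $u$, so $v_{1}=iw$ and $a_{2}=-\tfrac12 E^{\nu}(X_{0}^{2})-E^{\nu}(A_{1}w)\in\RR$. The nonnegativity argument via $|\varphi_{n}^{\nu}(s/\sqrt n)|\leq 1$ and $\lambda(s/\sqrt n)^{n}\to e^{-\sigma^{2}s^{2}/2}$ is a clean way to get $\sigma^{2}\geq 0$ without identifying $\sigma^{2}$ as the asymptotic variance; that identification (which the paper records separately in a remark, citing \cite[Lemma IV.3]{HeH}) would of course also give nonnegativity, but your route avoids it. In short, the proposal is correct and fills in, at the level of an explicit perturbation computation, what the paper leaves to the cited reference.
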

\begin{proof}
The statement concerning $\Pi\left(t\right)$ immediately follows
from Taylor's expansion and formula \eqref{eq: Q for small t}, while
the expansion of the main eigenvalue is carried out in \cite[Lemma IV.4']{HeH}. \end{proof}
\begin{remark}
It may be shown \cite[Lemma IV.3, pp 24-25]{HeH} that $\sigma^{2}$
actually equals the asymptotic variance ${\displaystyle \lim_{n\to\infty}\frac{E^{\nu}\left(S_{n}^{2}\right)}{n}}$
and therefore, $\sigma^{2}=0$ corresponds to the case of a degenerate
limit for $W_{n}$. Since this case is excluded in the main theorem,
in what follows, we assume that the results of the previous lemma
hold with $\sigma^{2}>0$. 
\end{remark}

\subsection{The connection between strong aperiodicity and the spectrum of $Q(t)$.}

The next lemma connects the notion of strong aperiodicity with the
spectrum of the characteristic function operator:
\begin{lemma}
\label{lem:Aperiodicity} An aperiodic and irreducible Markov chain
$\left\{ X_{n}\right\} $ is strongly aperiodic iff $\rho\left(Q\left(t\right)\right)<1$
for all real $t\neq2\pi\ZZ$ ($\rho\left(Q\left(t\right)\right)$
is the spectral radius of $Q\left(t\right)$).\end{lemma}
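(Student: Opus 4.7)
\medskip

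The plan is to prove the two implications separately, beginning with the easier direction and using a Perron--Frobenius style argument for the harder one. As a preliminary observation, since $|(Q(t)f)(x)|\le\sum_{y}P(x,y)|f(y)|$ for every $f\in\mathbb{C}^{S}$, the operator $Q(t)$ is dominated entrywise (in modulus) by the stochastic matrix $P$, so $\rho(Q(t))\le\rho(P)=1$ for every real $t$. Thus in both directions the real question is whether equality $\rho(Q(t))=1$ can occur.

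For the direction ``not strongly aperiodic $\Rightarrow$ $\rho(Q(t))=1$ for some $t\notin 2\pi\ZZ$'', I would simply plug into the definition. If $t\notin 2\pi\ZZ$, $|\lambda|=1$, $|\varphi|\equiv 1$ and $e^{ity}=\lambda\varphi(x)/\varphi(y)$ whenever $P(x,y)>0$, then rearranging gives $e^{ity}\varphi(y)=\lambda\varphi(x)$ for such $x,y$, so
\[
(Q(t)\varphi)(x)=\sum_{y\in S}P(x,y)e^{ity}\varphi(y)=\lambda\varphi(x)\sum_{y\in S}P(x,y)=\lambda\varphi(x).
\]
Hence $\lambda$ is an eigenvalue of $Q(t)$ of modulus $1$, so $\rho(Q(t))\geq 1$, forcing equality with the a priori bound above.

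The converse is the substantive direction: assuming $\rho(Q(t))=1$ for some $t\in\RR$, I need to produce a unimodular eigenvector $\varphi$ satisfying the cocycle equation, and then strong aperiodicity will force $t\in 2\pi\ZZ$. Since $S$ is finite, the spectral radius is attained, so there exist $\lambda$ with $|\lambda|=1$ and $\varphi\not\equiv 0$ with $Q(t)\varphi=\lambda\varphi$. Let $M=\max_x|\varphi(x)|$ and pick $x_0$ where the maximum is attained. Then
\[
M=|\varphi(x_0)|=\left|\sum_{y}P(x_0,y)e^{ity}\varphi(y)\right|\le\sum_{y}P(x_0,y)|\varphi(y)|\le M,
\]
so both inequalities are equalities. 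Equality in the second forces $|\varphi(y)|=M$ on every $y$ with $P(x_0,y)>0$, and iterating using irreducibility of $P$ extends this to all of $S$; thus (after normalization) $|\varphi|\equiv 1$. Equality in the first (the triangle inequality) forces all nonzero summands $P(x_0,y)e^{ity}\varphi(y)$ to have the same argument as $\lambda\varphi(x_0)$, i.e.\ $e^{ity}\varphi(y)=\lambda\varphi(x_0)$ for all $y$ with $P(x_0,y)>0$. The same argument applies with $x_0$ replaced by any $x\in S$ (since $|\varphi(x)|=M$ everywhere and the eigenvalue equation holds at every site), yielding
\[
e^{ity}=\lambda\,\frac{\varphi(x)}{\varphi(y)}\quad\text{whenever } P(x,y)>0.
\]
Strong aperiodicity now forces $t\in 2\pi\ZZ$, completing the contrapositive.

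The main subtlety, and the only place the proof requires care, is the Perron--Frobenius step that promotes $|\lambda|=1$ into the existence of an eigenvector with $|\varphi|\equiv 1$ satisfying the cocycle identity. Everything else is essentially bookkeeping: the first implication is a direct substitution, and the promotion of the pointwise identity from $x_0$ to arbitrary $x$ is automatic once $|\varphi|$ is known to be constant and the eigenvalue equation is in force at every state.
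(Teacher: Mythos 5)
Your treatment of the converse direction (from $\rho(Q(t))=1$ to the cocycle identity) is correct and matches the paper's argument in substance: you extract $|\varphi|\equiv\mathrm{const}$ from equality at a maximizing state and propagate it by irreducibility, whereas the paper shows $|\varphi|\leq P|\varphi|$ and invokes simplicity of the eigenvalue $1$ of $P$, then uses Proposition \ref{Trivial proposition} in place of your triangle-inequality equality analysis. These are the same underlying idea in slightly different dress.

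There is, however, a gap in the forward direction. By Definition \ref{def:Aperiodicity}, $\{X_n\}$ fails to be strongly aperiodic if there is a solution $(t,\lambda,\varphi)$ other than $t\in2\pi\ZZ$ together with $\varphi\equiv\mathrm{const}$. This allows two sub-cases: either a solution with $t\notin2\pi\ZZ$, or a solution with $t\in2\pi\ZZ$ and $\varphi$ non-constant. You plug in only the first. You must also dispose of the second: when $t\in2\pi\ZZ$ and the state values lie in $\ZZ$, one has $Q(t)=Q(0)=P$, so a non-constant unimodular $\varphi$ with $P\varphi=\lambda\varphi$, $|\lambda|=1$, would contradict the spectral gap of the aperiodic, irreducible matrix $P$ at its simple eigenvalue $1$. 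The paper handles exactly this case; your proof as written leaves it open, so the implication that failure of strong aperiodicity yields some $t\notin2\pi\ZZ$ with $\rho(Q(t))=1$ is not fully established until you rule out the $t\in2\pi\ZZ$ sub-case.
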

\begin{proof}
Note that since $Q\left(t\right)$ is a finite dimensional operator
of norm less than or equal to $1$, $\rho\left(Q\left(t\right)\right)<1$
is equivalent to demanding that $Q\left(t\right)$ has no eigenvalues
of modulus $1$. 

If $\left\{ X_{n}\right\} $ is not strongly aperiodic, we have 
\[
e^{ity}=\lambda\frac{\varphi\left(x\right)}{\varphi\left(y\right)},\quad\forall x,y\in S\: such\: that\: P^{x}\left(y\right)>0
\]
where $\left|\lambda\right|=1,$$\left|\varphi\right|\equiv1,$ $t\notin2\pi\mbox{\ensuremath{\ZZ}}$
or $t\in2\pi\mathbb{Z}$ and $\varphi\neq const$. In the first case
notice that, 
\begin{equation}
\left(Q\left(t\right)\varphi\right)\left(x\right)=\int\limits _{S}e^{ity}\varphi\left(y\right)\PP^{x}\left(dy\right)=\lambda\varphi\left(x\right)\label{eq: eigenvalue of Q(t)}
\end{equation}
whence $\rho\left(Q\left(t\right)\right)=1$ for $t\notin2\pi\mathbb{Z}.$

In the second case ( $t\in2\pi\mathbb{Z})$, since $y\in\mathbb{Z}$
we have 
\[
\left(Q(t)\varphi\right)(x)=\left(Q(0)\varphi\right)(x)=\int_{S}\varphi(y)P^{x}(dy)=\lambda\varphi(x).
\]
This is impossible for non constant $\varphi$ since we have a spectral
gap property for $t=0$ (see the beginning of the section). This proves
one direction of the iff statement. 

To prove the opposite direction observe that if equation \eqref{eq: eigenvalue of Q(t)}holds
with $\left|\lambda\right|=1$ we have

\[
\left|\varphi\left(x\right)\right|=\left|\left(Q\left(t\right)\varphi\right)\left(x\right)\right|\leq\int\limits _{S}\left|\varphi\left(y\right)\right|\PP^{x}\left(dy\right)=\left(P\left|\varphi\right|\right)\left(x\right).
\]
Since, $1$ is a simple eigenvalue of $P$, we must have $\left|\varphi\right|\equiv const$
and therefore, we may assume that $\left|\varphi\right|\equiv1$.
Hence since 
\[
\int\limits _{S}e^{ity}\varphi\left(y\right)\PP^{x}\left(dy\right)=\lambda\varphi\left(x\right),
\]
and $\left|\lambda\varphi(x)\right|=\left|\varphi(y)\right|=1$, it
follows from Proposition \ref{Trivial proposition} in the Appendix
that 
\[
e^{ity}\varphi\left(y\right)=\lambda\varphi\left(x\right),\quad\forall x,y\in S,\: with\:\PP^{x}\left(y\right)>0
\]
 and the claim follows.
\end{proof}

\section{\label{sub:Probabilistic Estimates}Estimates.}

In this section we derive the main probability estimates needed to
prove the main theorem. When lemma \ref{lem: Expansion of Main Eigenvalue Lemma}
is used, it is always assumed to hold with $\xi>0$ (see the remark
that follows the lemma). 
\begin{lemma}
\label{lem:Non-Arit. Lemma}If $\{X_{n}\}$ is strongly aperiodic,
then for every $\delta>0,$ there are $N\in\NN,$ $0<r_{\delta}<1,$
$c>0$ such that for every $n>N,$ $t\in\left[-\pi,\pi\right]\setminus\left(-\delta,\delta\right),$
\[
\left\Vert Q\left(t\right)^{n}\right\Vert \leq cr_{\delta}^{n}.
\]
 \end{lemma}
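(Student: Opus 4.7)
The plan is to combine Lemma~\ref{lem:Aperiodicity} with a compactness argument on $K_\delta := [-\pi,\pi]\setminus(-\delta,\delta)$ and Gelfand's formula for the spectral radius. Note first that, with $\mathbb{C}^{S}$ endowed with the supremum norm, $\|Q(t)\|\leq 1$ for every $t\in\RR$, because
\[
|(Q(t)f)(x)|=\Bigl|\sum_{y\in S}P(x,y)e^{ity}f(y)\Bigr|\leq\sum_{y\in S}P(x,y)|f(y)|\leq\|f\|_{\infty}.
\]
This gives the trivial bound we will use on ``leftover'' factors below.

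Next, for each fixed $t_0\in K_\delta$, strong aperiodicity together with Lemma~\ref{lem:Aperiodicity} gives $\rho(Q(t_0))<1$ (the set $K_\delta$ avoids $2\pi\ZZ\cap[-\pi,\pi]=\{0\}$). By Gelfand's formula $\rho(Q(t_0))=\lim_{n\to\infty}\|Q(t_0)^{n}\|^{1/n}$, so there exist $m=m(t_0)\in\NN$ and $\alpha=\alpha(t_0)<1$ with $\|Q(t_0)^{m}\|\leq\alpha$. Since $t\mapsto Q(t)$ is continuous (in fact analytic), the map $t\mapsto\|Q(t)^{m}\|$ is continuous, and hence there is an open neighborhood $U_{t_0}$ of $t_0$ and $\alpha'(t_0)<1$ such that $\|Q(t)^{m(t_0)}\|\leq\alpha'(t_0)$ for every $t\in U_{t_0}$.

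Now cover the compact set $K_\delta$ by finitely many such neighborhoods $U_{t_1},\ldots,U_{t_k}$, with associated integers $m_i:=m(t_i)$ and constants $\alpha_i:=\alpha'(t_i)<1$. Set
\[
M:=\max_{1\leq i\leq k}m_i,\qquad r_\delta:=\max_{1\leq i\leq k}\alpha_i^{1/m_i}<1.
\]
Fix $t\in K_\delta$ and choose $i$ with $t\in U_{t_i}$. For $n\geq M$, write $n=q m_i+r$ with $0\leq r<m_i$. Using submultiplicativity and $\|Q(t)\|\leq 1$,
\[
\|Q(t)^{n}\|\leq\|Q(t)^{m_i}\|^{q}\|Q(t)\|^{r}\leq\alpha_i^{q}=\bigl(\alpha_i^{1/m_i}\bigr)^{qm_i}\leq r_\delta^{\,n-m_i}\leq r_\delta^{-M}\,r_\delta^{\,n}.
\]
Taking $c:=r_\delta^{-M}$ and $N:=M$ yields the claimed uniform bound $\|Q(t)^{n}\|\leq c\,r_\delta^{\,n}$ for all $t\in K_\delta$ and $n>N$.

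The only conceptual step is the passage from pointwise $\rho(Q(t))<1$ (given by Lemma~\ref{lem:Aperiodicity}) to a \emph{uniform} exponential rate; this is handled by the covering argument above, using continuity of $t\mapsto Q(t)$ and submultiplicativity of the operator norm. No delicate spectral perturbation is needed here since we stay away from $t=0$, where the spectral gap could close.
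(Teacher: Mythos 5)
Your proof is correct and takes essentially the same route as the paper: invoke Lemma~\ref{lem:Aperiodicity} to get $\rho(Q(t))<1$ pointwise on the compact set $[-\pi,\pi]\setminus(-\delta,\delta)$, use Gelfand's formula to find a power $m$ with $\|Q(t)^m\|<1$, upgrade to a neighborhood by continuity, and finish with a finite subcover and submultiplicativity. The only cosmetic difference is that you control the ``leftover'' factors $\|Q(t)^r\|$ via the uniform bound $\|Q(t)\|\le 1$, whereas the paper absorbs them into a constant $c_t=\sup\|Q(x)\|^k/\tilde r_t^{\,k}$; both are fine.
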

\begin{proof}
Fix $\delta>0$ and $t\in[-\pi,\pi]\backslash(-\delta,\delta)$. By
strong aperiodicity (Lemma \ref{lem:Aperiodicity}), $r_{t}:=r\left(Q\left(t\right)\right)<1.$ 

Fix, $r_{t}<\tilde{r}_{t}<1$ and choose $n_{0}$ such that 
\[
r_{t}=\inf_{n\in\NN}\left\Vert Q(t)^{n}\right\Vert ^{\frac{1}{n}}\leq\left\Vert Q(t)^{n_{0}}\right\Vert ^{\frac{1}{n_{0}}}<\tilde{r}_{t}.
\]
 By continuity of $Q\left(\cdot\right)$ at $t$, there exists $\eta=\eta_{t}>0$,
such that if $\tau\in(t-\eta,t+\eta)$, 
\begin{equation}
\left\Vert Q\left(\tau\right)^{n_{0}}\right\Vert <\tilde{r}_{t}^{n_{0}}.\label{eq: spectral bound on a neighborhood}
\end{equation}
For $n\in\mathbb{N},$ write $n=n_{0}m+k$ where $k<n_{0}$. It follows
from \eqref{eq: spectral bound on a neighborhood} that for every
$\tau\in\left(t-\eta,t+\eta\right)$, 
\[
\left\Vert Q\left(\tau\right)^{n}\right\Vert \leq\left\Vert Q\left(\tau\right)^{n_{0}}\right\Vert ^{m}\cdot\left\Vert Q\left(\tau\right)^{k}\right\Vert \leq\tilde{r_{t}}^{n_{0}m}\cdot\left\Vert Q\left(\tau\right)^{k}\right\Vert \leq\tilde{r}_{t}^{n}\cdot\frac{\left\Vert Q\left(\tau\right)\right\Vert ^{k}}{\tilde{r}_{t}^{k}}.
\]
Setting $c_{t}=\sup\frac{\left\Vert Q\left(x\right)\right\Vert ^{k}}{\tilde{r}_{t}^{k}}$where
the supremum is taken over all $x\in\left(t-\eta,t+\eta\right),\, k\leq n_{0}$
we obtain 
\[
\left\Vert Q\left(\tau\right)^{n}\right\Vert \leq c_{t}\tilde{r}_{t}^{n}.
\]

Hence, for every $t\in\left[-\pi,\pi\right]\setminus\left(-\delta,\delta\right),$
we can pick positive numbers $n_{t},$ $\eta_{t},c_{t}$ and $r_{t}<1$
such that for every $n>n_{t}$ and $\tau\in\left(t-\eta_{t},t+\eta_{t}\right)$,
\[
\left\Vert Q\left(\tau\right)^{n}\right\Vert \leq c_{t}r_{t}^{n}.
\]
Since, $\left[-\pi,\pi\right]\setminus\left(-\delta,\delta\right)$
is compact there exists a finite sequence $\left\{ t_{i}\right\} _{i=1,..l}$
such that 
\[
\left[-\pi,\pi\right]\setminus\left(-\delta,\delta\right)\subset\bigcup\limits _{i=1}^{l}\left(t_{i}-\eta_{t_{i}},t_{i}+\eta_{t_{i}}\right).
\]

The result follows by setting $N=\max\left\{ n_{t_{i}}\right\} ,$
$r_{\delta}=\max\left\{ r_{t_{i}}\right\} ,$ $c=\max\left\{ c_{t_{i}}\right\} $. \end{proof}
\begin{lemma}
\label{lem:LLT}If $\left\{ X_{n}\right\} $ is strongly aperiodic
and $E^{\nu}\left(X_{0}\right)=0,$ there exists a constant $C$ such
that for any initial distribution $\mu,$ $n\in\NN,$ $x\in\ZZ,$
\[
P^{\mu}\left(S_{n}=x\right)\leq\frac{C}{\sqrt{n}}.
\]
 \end{lemma}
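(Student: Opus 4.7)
The plan is a standard Fourier inversion argument for a local limit theorem, combining the spectral decomposition of $Q(t)$ near zero (Lemma \ref{lem: Expansion of Main Eigenvalue Lemma}) with the uniform spectral gap away from zero (Lemma \ref{lem:Non-Arit. Lemma}). Since $S_n$ is $\mathbb{Z}$-valued, the Fourier inversion formula gives
\[
\PP^{\mu}(S_n=x) \;=\; \frac{1}{2\pi}\int_{-\pi}^{\pi} e^{-itx}\,\varphi_n^{\mu}(t)\,dt,
\]
so it suffices to bound $\int_{-\pi}^{\pi}|\varphi_n^{\mu}(t)|\,dt$ by a constant multiple of $n^{-1/2}$. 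Recalling $\varphi_n^{\mu}(t)=E^{\mu}(Q(t)^n \One)$, the integrand is uniformly controlled by $\|Q(t)^n\|$ times a constant depending only on $|S|$.

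I would fix $\delta>0$ small enough that the decomposition \eqref{eq:Q(t) structure} is valid on $I=(-\delta,\delta)$ and that the expansion of Lemma \ref{lem: Expansion of Main Eigenvalue Lemma} yields
\[
|\lambda(t)| \;\leq\; 1-\tfrac{\sigma^{2}}{4}t^{2} \;\leq\; e^{-\sigma^{2}t^{2}/4}, \qquad t\in(-\delta,\delta),
\]
using $\sigma^{2}>0$. Splitting the integral into $|t|<\delta$ and $\delta\leq|t|\leq\pi$, on the outer piece Lemma \ref{lem:Non-Arit. Lemma} gives $\|Q(t)^n\|\leq c r_{\delta}^n$ with $r_{\delta}<1$, so this contribution is exponentially small in $n$ and is thus $o(n^{-1/2})$.

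On the inner piece I would write $Q(t)^n = \lambda(t)^n\,\Pi(t) + N(t)^n$. Since $\sup_{t\in I}\|N(t)\|\leq 1-\eta<1$, the contribution of the $N(t)^n$ part is again exponentially small. The dominant term is bounded by
\[
\int_{-\delta}^{\delta} |\lambda(t)|^n\,\|\Pi(t)\|\,dt
\;\leq\; C'\int_{-\infty}^{\infty} e^{-n\sigma^{2}t^{2}/4}\,dt
\;=\; \frac{C''}{\sqrt{n}},
\]
where $C'=\sup_{t\in I}\|\Pi(t)\|<\infty$ by analyticity of $\Pi$. Combining the three pieces yields the claimed bound $\PP^{\mu}(S_n=x)\leq C/\sqrt{n}$ with a constant $C$ independent of $\mu$ and $x$.

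There is no serious obstacle here; the only point that requires care is to verify that the constants produced by Lemma \ref{lem: Expansion of Main Eigenvalue Lemma} and by \eqref{eq:Q(t) structure} can be chosen uniformly on a common neighborhood of $0$, so that the Gaussian majorant $e^{-n\sigma^{2}t^{2}/4}$ is valid on the entire interval of integration $(-\delta,\delta)$. Once $\delta$ is fixed this way, the rest is a routine Gaussian integration plus the exponential tail estimate from Lemma \ref{lem:Non-Arit. Lemma}.
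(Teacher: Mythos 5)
Your proposal is correct and follows essentially the same route as the paper: Fourier inversion, splitting the integral at $|t|=\delta$, using Lemma \ref{lem:Non-Arit. Lemma} to kill the outer piece, and using the decomposition \eqref{eq:Q(t) structure} with the bound $|\lambda(t)|\leq 1-at^{2}$ and $\|N(t)^{n}\|\leq(1-\eta)^{n}$ to bound the inner piece by a Gaussian integral of order $n^{-1/2}$. The only cosmetic difference is that you pass to the majorant $e^{-\sigma^{2}t^{2}/4}$ before integrating, whereas the paper changes variables $t=x/\sqrt{n}$ and then bounds $(1-ax^{2}/n)^{n}\leq e^{-ax^{2}}$; the content is identical.
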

\begin{proof}
In view of equation \eqref{eq:Q(t) structure} and lemma \ref{lem: Expansion of Main Eigenvalue Lemma},
there exist $\delta,\, a,\,\eta>0$ such that for $t\in(-\delta,\delta)$,
\[
Q\left(t\right)=\lambda\left(t\right)\Pi\left(t\right)+N\left(t\right)
\]

where 
\[
\left|\lambda\left(t\right)\right|\leq1-at^{2},\quad\left\Vert N\left(t\right)^{n}\right\Vert \leq\left(1-\eta\right)^{n}.
\]

By the inversion formula for Fourier transform, 

\begin{eqnarray}
\sqrt{n}\PP^{\mu}\left(S_{n}=x\right) & = & \frac{\sqrt{n}}{2\pi}\int\limits _{-\pi}^{\pi}E^{\mu}\left(Q\left(t\right)^{n}\One\right)e^{-itx}dt\\
 & \leq & \left|\frac{\sqrt{n}}{2\pi}\int\limits _{-\delta}^{\delta}E^{\mu}\left(Q\left(t\right)^{n}\One\right)e^{-itx}dt\right|+\frac{\sqrt{n}}{2\pi}\int\limits _{R_{\delta}}\left\Vert Q\left(t\right)^{n}\right\Vert dt\label{eq:LLT Bound 1}
\end{eqnarray}

By lemma \ref{lem:Non-Arit. Lemma}, there are $N\in\NN,$ $0<r_{\delta}<1,$
$c>0$ such that for any $n>N$, 
\[
\frac{\sqrt{n}}{2\pi}\int\limits _{R_{\delta}}\left\Vert Q\left(t\right)^{n}\right\Vert dt\leq c\sqrt{n}r_{\delta}^{n}.
\]
Therefore, the second term in \eqref{eq:LLT Bound 1} is uniformly
bounded in $n$ . We proceed with estimating the first term in \eqref{eq:LLT Bound 1}.
\begin{eqnarray}
\left|\frac{\sqrt{n}}{2\pi}\int\limits _{-\delta}^{\delta}E^{\mu}\left(Q\left(t\right)^{n}\One\right)e^{-itx}dt\right| & \leq & \frac{\sqrt{n}}{2\pi}\int\limits _{-\delta}^{\delta}\left|\lambda\left(t\right)\right|^{n}dt+\frac{\sqrt{n}}{2\pi}\int\limits _{-\delta}^{\delta}\left\Vert N\left(t\right)^{n}\right\Vert dt\\
 & \leq & \frac{\sqrt{n}}{2\pi}\int\limits _{-\delta}^{\delta}\left(1-at^{2}\right)^{n}dt+\frac{2\delta\sqrt{n}}{2\pi}\left(1-\eta\right)^{n}.\label{eq: LLT Bound 2}
\end{eqnarray}

It is obvious that the second term in \eqref{eq: LLT Bound 2} tends
to $0$. To see that the first term is bounded, a change of variables
$t=\frac{x}{\sqrt{n}}$ gives us, 

\begin{eqnarray*}
\frac{\sqrt{n}}{2\pi}\int\limits _{-\delta}^{\delta}\left(1-at^{2}\right)^{n}dt & = & \frac{1}{2\pi}\int\limits _{-\delta\sqrt{n}}^{\delta\sqrt{n}}\left(1-\frac{ax^{2}}{n}\right)^{n}dx\\
 & \leq & \frac{1}{2\pi}\int\limits _{-\delta\sqrt{n}}^{\delta\sqrt{n}}e^{-ax^{2}}dx.
\end{eqnarray*}
Since, the last integral is uniformly bounded in $n$, this completes
the proof.\end{proof}
\begin{lemma}
(Estimation of the Potential Kernel, see \cite{JP})\label{pro:Potential Kernel Estimate}
If $\left\{ X_{n}\right\} $ is strongly aperiodic and $E^{\nu}\left(X_{0}\right)=0$,
then there exists $C>0$ such that for every $x,y\in\mathbb{Z}$ and
$s\in S$, and initial distribution $\mu,$ 
\[
\sum_{n=0}^{\infty}\left|\PP^{\mu}\left(S_{n}=x,\, X_{n}=s\right)-\PP^{\mu}\left(S_{n}=y,\, X_{n}=s\right)\right|\leq C\left|x-y\right|.
\]
\end{lemma}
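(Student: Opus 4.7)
The plan is to telescope to reduce to consecutive integers and then estimate sums of absolute values via Fourier inversion combined with the spectral decomposition of $Q(t)$. Writing $a_n(z):=\PP^{\mu}(S_n=z,X_n=s)$, telescoping along a path of consecutive integers from $x$ to $y$ and the triangle inequality give
$$\sum_{n=0}^{\infty}|a_n(x)-a_n(y)|\le|x-y|\cdot\sup_{z\in\ZZ}\sum_{n=0}^{\infty}|a_n(z)-a_n(z+1)|,$$
so it is enough to bound the inner supremum by a constant $C$.

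By Fourier inversion and unrolling the Markov expectation, $E^{\mu}(e^{itS_n}\One_{\{X_n=s\}})=\langle\mu_t,Q(t)^n e_s\rangle$ with $e_s(y):=\One_{\{y=s\}}$ and $\mu_t(x_0):=\mu(x_0)e^{itx_0}$, hence
$$a_n(z)-a_n(z+1)=\frac{1}{2\pi}\int_{-\pi}^{\pi}\langle\mu_t,Q(t)^n e_s\rangle\,e^{-itz}(1-e^{-it})\,dt.$$
Fix $\delta>0$ small so that \eqref{eq:Q(t) structure} and Lemma \ref{lem: Expansion of Main Eigenvalue Lemma} apply. For $|t|\ge\delta$, Lemma \ref{lem:Non-Arit. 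Lemma} gives $\|Q(t)^n\|\le cr_\delta^n$, hence a uniformly (in $z$) exponentially summable contribution. For $|t|<\delta$ decompose $Q(t)^n=\lambda(t)^n\Pi(t)+N(t)^n$; the $N(t)^n$ piece is again exponentially summable since $\|N(t)^n\|\le(1-\eta)^n$. The main remaining term is
$$J_n(z):=\frac{1}{2\pi}\int_{-\delta}^{\delta}\lambda(t)^n\,\Phi_\mu(t)\,(1-e^{-it})\,e^{-itz}\,dt,$$
with $\Phi_\mu(t):=\langle\mu_t,\Pi(t)e_s\rangle$ smooth near $0$ and $\Phi_\mu(0)=\nu(s)$. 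Substituting $u=t\sqrt n$ and combining the bound $|\lambda(u/\sqrt n)|^n\le e^{-au^2}$ (from $|\lambda(t)|\le 1-at^2$) with the Gaussian limit $\lambda(u/\sqrt n)^n\to e^{-\sigma^2 u^2/2}$ supplied by Lemma \ref{lem: Expansion of Main Eigenvalue Lemma}, together with $|1-e^{-iu/\sqrt n}|\le|u|/\sqrt n$ and an explicit Gaussian-integral computation, yields the local-CLT-type derivative bound
$$|J_n(z)|\le\frac{C\,|z|}{n^{3/2}}\,e^{-\gamma z^2/n}+Ce^{-\eta'n},$$
uniformly in $z$. Splitting the sum at $n\sim z^2$, one has $\sum_{n\ge z^2}|z|n^{-3/2}\le C$, while for $n<z^2$ the Gaussian factor $e^{-\gamma z^2/n}$ is super-exponentially small in $z^2/n\ge 1$, so $\sum_n|J_n(z)|\le C$ uniformly.

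The main obstacle is the uniform bound on $J_n(z)$ with the precise $|z|/n^{3/2}$ decay. One must control $\lambda(t)^n-e^{-\sigma^2 nt^2/2}$ across three regimes (a Gaussian window $|u|\le n^{1/6}$, a moderate-deviation window $n^{1/6}\le|u|\le\delta\sqrt n$ handled by the quadratic bound $|\lambda(t)|\le 1-at^2$, and the tail $|t|\ge\delta$ already absorbed above) and perform one effective integration by parts in $t$, supplied ``for free'' by the factor $(1-e^{-it})$, that turns the naive $n^{-1/2}$ LCLT bound into the derivative-like $|z|n^{-3/2}$ bound needed for summability. This is essentially a local-CLT estimate with one spatial derivative, standard but technically delicate in the perturbation-of-Markov framework.
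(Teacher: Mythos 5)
Your telescoping step and the structural decomposition (split at $|t|\gtrless\delta$, peel off $N(t)^n$, reduce to the $\lambda(t)^n\Pi(t)$ piece) are sound, and the one-variable Fourier inversion $a_n(z)=\frac{1}{2\pi}\int_{-\pi}^{\pi}\langle\mu_t,Q(t)^ne_s\rangle e^{-itz}\,dt$ is a legitimate (arguably cleaner) alternative to the paper's double Fourier transform in $(t_1,t_2)$. But the route you then take is genuinely different from the paper's and, as written, has a real gap at its center.

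The gap is the claimed bound $|J_n(z)|\le C|z|n^{-3/2}e^{-\gamma z^2/n}+Ce^{-\eta' n}$. You assert this as a ``local-CLT-type derivative bound'' following from $u=t\sqrt{n}$, the quadratic bound $|\lambda(t)|\le 1-at^2$, and one integration by parts, but none of those tools produce the Gaussian factor $e^{-\gamma z^2/n}$. Taking absolute values gives only $|J_n(z)|\le C\int_{-\delta}^{\delta}e^{-ant^2}|t|\,dt=O(1/n)$; one integration by parts in $t$ (using the boundary decay $|\lambda(\delta)|^n\le e^{-cn}$) gives $O\bigl(\tfrac{1}{|z|\sqrt n}\bigr)$; two give $O(1/z^2)$ with no $n$-decay (because $f''$ contains the term $\lambda(t)^n\Phi_\mu(t)(-e^{-it})$, whose integral is $\Theta(n^{-1/2})$). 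Taking minima over these gives a bound whose sum over $n$ still diverges for every fixed $z$: the divergence at $n\ge z^2$ is not removed by any finite number of integrations by parts. The Gaussian decay in $z$ requires a genuine saddle-point/exponential-tilting argument using the analyticity of $\lambda$ in a complex strip (Cram\'er-type large deviations) --- true in the finite-state setting, but a substantial additional piece of machinery that the paper does not develop and that you do not supply. Worse, the telescoping step forces you to prove the strictly stronger statement $\sup_z\sum_n|a_n(z)-a_n(z+1)|<\infty$, so you cannot avoid this sharp estimate.

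The paper avoids all of this by never telescoping and by summing the geometric series \emph{inside} the integral first: using $\sum_n|\lambda(t)|^n\le\frac{1}{C_1t^2}$ and $\sum_n|\Im\lambda(t)^n|\le\frac{C_2|t|^3}{C_1^2 t^4}$, the problem reduces to the elementary integral estimates $\int_{-\delta}^{\delta}\frac{1-\cos t(x-y)}{t^2}\,dt=O(|x-y|)$ and $\int_{-\delta}^{\delta}\frac{|\sin t(x-y)|}{|t|}\,dt=O(|x-y|)$, in which the $O(|x-y|)$ emerges from splitting at $|t|=1/|x-y|$. No per-$n$ local limit theorem, no Gaussian tail, and no exchange-of-sum-and-integral issues beyond Tonelli. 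If you want to salvage your route you would need to prove the Cram\'er-type LCLT with Gaussian remainder for finite-state Markov chains; but the cleaner move is to drop the telescoping, keep the factor $1-e^{-it(x-y)}$, interchange $\sum_n$ with $\int$, and estimate the resulting $t$-integral directly.
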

\begin{proof}
We denote by $\gamma_{t}\left(\cdot\right):S\rightarrow\mathbb{T}$
the function $e^{it\left(\cdot\right)}$ from the state space $S$
to the circle $\mathbb{T}=\left\{ z\in\mathbb{C}:\left|z\right|=1\right\} $. 

For simplicity of notation we assume that $y=0$. The generalization
to the case when $y\neq0$ is straightforward. Note that since $x\in\mathbb{Z}$,
its enough to prove that the term converges for every $x$ and is
$O(|x|)$ as $|x|\to\infty$. 

By lemma \ref{lem: Expansion of Main Eigenvalue Lemma} and equation
\eqref{eq:Q(t) structure} we may fix $\delta>0$ and positive constants
$C_{1},\, C_{2},C_{3}\,\eta$ such that for $t\in\left(-\delta,\delta\right),$ 

\begin{eqnarray*}
\left|\lambda\left(t\right)\right| & \leq & 1-C_{1}t^{2}\\
\left|\Im\lambda\left(t\right)\right| & \leq & C_{2}\left|t^{3}\right|\\
\left\Vert N\left(t\right)\right\Vert  & \leq & 1-\eta\\
\Pi\left(t\right)\left(\cdot\right) & = & E^{\nu}\left(\cdot\right)+\epsilon\left(t\right)\left(\cdot\right)
\end{eqnarray*}

where $\left\Vert \epsilon\left(t\right)\right\Vert \leq C_{3}\cdot\left|t\right|$.
Set $R_{\delta}=\left[-\pi,\pi\right]\setminus\left(-\delta,\delta\right).$

By the inversion formula for the Fourier Transform,
\begin{eqnarray}
 &  & \sum_{n=0}^{\infty}\left|\PP^{\mu}\left(S_{n}=0,\, X_{n}=s\right)-\PP^{\mu}\left(S_{n}=x,\, X_{n}=s\right)\right|\\
 & = & \sum_{n=0}^{\infty}\left|\Re\left\{ \frac{1}{\left(2\pi\right)^{2}}\int\limits _{-\pi}^{\pi}\intop\limits _{-\pi}^{\pi}E^{\mu}\left(Q\left(t_{1}\right)^{n}\gamma_{t_{2}}\right)\cdot\left(1-e^{-it_{1}x}\right)\cdot e^{-it_{2}s}\, dt_{1}dt_{2}\right\} \right|\label{eq:Fourier Transform Inverse Formula}
\end{eqnarray}

By lemma \ref{lem:Non-Arit. Lemma} there are positive constants $n_{0},$$c,$
$r_{\delta}<1$ such that for every $n\geq n_{0},$ $t\in R_{\delta},$
\[
\left\Vert Q\left(t\right)^{n}\right\Vert \leq cr_{\delta}^{n}.
\]
 Thus,
\begin{eqnarray*}
 &  & \sum_{n=n_{0}}^{\infty}\frac{1}{(2\pi)^{2}}\intop_{-\pi}^{\pi}\int\limits _{R_{\delta}}\left|E^{\mu}\left(Q\left(t_{1}\right)^{n}\gamma_{t_{2}}\right)\cdot\left(1-e^{-it_{1}x}\right)\cdot e^{-it_{2}s}\right|\, dt_{1}dt_{2}\\
 & \leq & \sum_{n=n_{0}}^{\infty}\frac{1}{\left(2\pi\right)^{2}}\int\limits _{-\pi}^{\pi}\intop\limits _{R_{\delta}}cr_{\delta}^{n}\cdot2\, dt_{1}dt_{2}<\infty
\end{eqnarray*}
Here $R_{\delta}=\left[-\pi,\pi\right]\backslash\left(-\delta,\delta\right)$.
It follows that 
\[
\sum\limits _{n=0}^{\infty}\left|\Re\left\{ \frac{1}{\left(2\pi\right)^{2}}\int\limits _{-\pi}^{\pi}\intop\limits _{R_{\delta}}E^{\mu}\left(Q\left(t_{1}\right)^{n}\gamma_{t_{2}}\right)\cdot\left(1-e^{-it_{1}x}\right)\cdot e^{-it_{2}s}\, dt_{1}dt_{2}\right\} \right|<\infty
\]
which is enough for the statement of the lemma. Hence, it remains
to estimate expression \eqref{eq:Fourier Transform Inverse Formula}
where the inner integration is carried over the set $\left(-\delta,\delta\right).$ 

We have, 

\begin{align}
 & \sum_{n=0}^{\infty}\left|\Re\left\{ \frac{1}{\left(2\pi\right)^{2}}\int\limits _{-\pi}^{\pi}\intop\limits _{-\delta}^{\delta}E^{\mu}\left(Q\left(t_{1}\right)^{n}\gamma_{t_{2}}\right)\cdot\left(1-e^{-it_{1}x}\right)\cdot e^{-it_{2}s}\, dt_{1}dt_{2}\right\} \right|\nonumber \\
 & \leq\sum_{n=0}^{\infty}\left|\Re\left\{ \frac{1}{\left(2\pi\right)^{2}}\int\limits _{-\pi}^{\pi}\intop\limits _{-\delta}^{\delta}E^{\mu}\left(\lambda\left(t_{1}\right)^{n}\Pi\left(t_{1}\right)\gamma_{t_{2}}\right)\cdot\left(1-e^{-it_{1}x}\right)\cdot e^{-it_{2}s}\, dt_{1}dt_{2}\right\} \right|\label{eq:Estimate1}\\
 & +\sum_{n=0}^{\infty}\left|\frac{1}{\left(2\pi\right)^{2}}\int\limits _{-\pi}^{\pi}\intop\limits _{-\delta}^{\delta}E^{\mu}\left(N\left(t_{1}\right)^{n}\gamma_{t_{2}}\right)\cdot\left(1-e^{-it_{1}x}\right)\cdot e^{-it_{2}s}\, dt_{1}dt_{2}\right|\nonumber 
\end{align}
Since, $\left\Vert N\left(t\right)\right\Vert \leq1-\eta<1$ and $\left|\gamma_{t}\left(\cdot\right)\right|\equiv1$,
we have 
\begin{equation}
\sum_{n=0}^{\infty}\left|\int\limits _{-\pi}^{\pi}\intop\limits _{-\delta}^{\delta}\frac{1}{\left(2\pi\right)^{2}}E^{\mu}\left(N\left(t_{1}\right)^{n}\gamma_{t_{2}}\right)\cdot\left(1-e^{-it_{1}x}\right)\cdot e^{-it_{2}s}\, dt_{1}dt_{2}\right|\leq\sum_{n=0}^{\infty}C^{'}\cdot\left(1-\eta\right)^{n}\leq C^{'}\frac{1}{\eta}\label{eq:Result1}
\end{equation}
where $C^{'}$ is some constant, which is again enough for our purposes. 

We proceed with estimating \eqref{eq:Estimate1}:

\begin{eqnarray}
 &  & \left|\Re\left\{ \frac{1}{\left(2\pi\right)^{2}}\int\limits _{-\pi}^{\pi}\intop\limits _{-\delta}^{\delta}E^{\mu}\left(\lambda\left(t_{1}\right)^{n}\Pi\left(t_{1}\right)\gamma_{t_{2}}\right)\cdot\left(1-e^{-it_{1}x}\right)\cdot e^{-it_{2}s}\, dt_{1}dt_{2}\right\} \right|\label{eq:Estimate2}\\
 & \leq & \left|\Re\left\{ \frac{1}{\left(2\pi\right)^{2}}\int\limits _{-\pi}^{\pi}\intop\limits _{-\delta}^{\delta}\lambda\left(t_{1}\right)^{n}E^{\nu}\left(\gamma_{t_{2}}\right)\cdot\left(1-e^{-it_{1}x}\right)\cdot e^{-it_{2}s}\, dt_{1}dt_{2}\right\} \right|\nonumber \\
 & + & \left|\Re\left\{ \frac{1}{\left(2\pi\right)^{2}}\int\limits _{-\pi}^{\pi}\intop\limits _{-\delta}^{\delta}E^{\mu}\left(\lambda\left(t_{1}\right)^{n}\cdot\epsilon\left(t_{1}\right)\left(\gamma_{t_{2}}\right)\right)\cdot\left(1-e^{-it_{1}x}\right)\cdot e^{-it_{2}s}\, dt_{1}dt_{2}\right\} \right|.\nonumber 
\end{eqnarray}
Since, $E^{\nu}\left(\gamma_{t_{2}}\right)=E^{\nu}\left(e^{it_{2}X_{0}}\right),$
by using the inversion formula and the Fubini theorem we obtain, 
\begin{eqnarray}
 &  & \left|\Re\left\{ \frac{1}{\left(2\pi\right)^{2}}\int\limits _{-\pi}^{\pi}\intop\limits _{-\delta}^{\delta}\lambda\left(t_{1}\right)^{n}E^{\nu}\left(\gamma_{t_{2}}\right)\cdot\left(1-e^{-it_{1}x}\right)\cdot e^{-it_{2}s}\, dt_{1}dt_{2}\right\} \right|\nonumber \\
 & = & \left|\Re\left\{ \frac{\nu_{s}}{2\pi}\intop\limits _{-\delta}^{\delta}\lambda\left(t\right)^{n}\left(1-e^{-itx}\right)\, dt\right\} \right|\leq\left|\Re\left\{ \frac{1}{2\pi}\intop\limits _{-\delta}^{\delta}\lambda\left(t\right)^{n}\cdot\left(1-e^{-itx}\right)\, dt\right\} \right|\nonumber \\
 & \leq & \frac{1}{2\pi}\intop\limits _{-\delta}^{\delta}\left|\lambda(t)\right|^{n}\cdot\left(1-\cos tx\right)\, dt+\frac{1}{2\pi}\intop\limits _{-\delta}^{\delta}\left|\Im\left\{ \lambda(t)^{n}\right\} \right|\left|\sin tx\right|\, dt.\label{eq:Estimate4}
\end{eqnarray}

By summing over $n$ the first term in expression \eqref{eq:Estimate4}
and by our choice of $\delta,$ 

\begin{eqnarray*}
\sum_{n=0}^{\infty}\frac{1}{2\pi}\intop\limits _{-\delta}^{\delta}\left|\lambda\left(t\right)\right|^{n}\cdot\left(1-\cos tx\right)\, dt & \leq & \frac{1}{2\pi}\intop\limits _{-\delta}^{\delta}\sum_{n=0}^{\infty}\left(1-C_{1}t^{2}\right)^{n}\left(1-\cos tx\right)\, dt\\
 & = & \frac{1}{2\pi}\intop\limits _{-\delta}^{\delta}\frac{1}{C_{1}t^{2}}\left(1-\cos tx\right)\, dt
\end{eqnarray*}

Now, 
\begin{eqnarray*}
\frac{1}{2\pi}\intop\limits _{-\frac{1}{x}}^{\frac{1}{x}}\frac{1}{C_{1}t^{2}}\left(1-\cos tx\right)\, dt & = & \O\left(\left|x\right|\right),\,\left(\left|x\right|\rightarrow\infty\right)
\end{eqnarray*}
and 
\[
\frac{1}{2\pi}\intop\limits _{\frac{1}{x}}^{\delta}\frac{1}{C_{1}t^{2}}\left(1-\cos tx\right)\, dt\leq\frac{1}{\pi}\intop\limits _{\frac{1}{x}}^{\delta}\frac{1}{C_{1}t^{2}}\, dt=\O\left(\left|x\right|\right),\,\left(\left|x\right|\rightarrow\infty\right).
\]

Hence, from the integrand being an even function, we have 
\begin{equation}
\sum_{n=0}^{\infty}\frac{1}{2\pi}\intop\limits _{-\delta}^{\delta}\left|\lambda\left(t\right)\right|^{n}\cdot\left(1-\cos tx\right)\, dt=\O\left(\left|x\right|\right),\,\left(\left|x\right|\rightarrow\infty\right)\label{eq:Result2}
\end{equation}

To estimate the sum over $n$ of the second term in expression \eqref{eq:Estimate4}
we note that, 

\[
\left|\Im\left\{ \lambda\left(t\right)^{n}\right\} \right|\leq\left|\Im\lambda\left(t\right)\right|\cdot n\left|\lambda\left(t\right)\right|^{n-1}\leq C_{2}t^{3}\cdot n\left(1-C_{1}t^{2}\right)^{n-1}
\]
where the first inequality is easily seen to be true by induction
on $n$. Therefore,
\begin{eqnarray}
\sum_{n=0}^{\infty}\frac{1}{2\pi}\intop\limits _{-\delta}^{\delta}\left|\Im\left\{ \lambda\left(t\right)^{n}\right\} \right|\left|\sin tx\right|\, dt & \leq & \frac{1}{2\pi}\intop\limits _{-\delta}^{\delta}\frac{C_{2}\left|t\right|^{3}}{C_{1}t^{4}}\left|\sin tx\right|\, dt\nonumber \\
 & = & \O\left(\left|x\right|\right),\,\left(\left|x\right|\rightarrow\infty\right)\label{eq:Result3}
\end{eqnarray}

This completes the estimation of the sum over $n$ of the first term
in expression \eqref{eq:Estimate2} and all is left is to estimate
the sum over $n$ of the second term in expression \eqref{eq:Estimate2}:

\begin{eqnarray}
 &  & \sum_{n=0}^{\infty}\left|\Re\left\{ \frac{1}{\left(2\pi\right)^{2}}\int\limits _{-\pi}^{\pi}\intop\limits _{-\delta}^{\delta}E^{\mu}\left(\lambda\left(t_{1}\right)^{n}\epsilon_{t_{1}}\left(\gamma_{t_{2}}\right)\right)\cdot\left(1-e^{-it_{1}x}\right)\cdot e^{-it_{2}s}\, dt_{1}dt_{2}\right\} \right|\nonumber \\
 & \leq & \frac{1}{\left(2\pi\right)^{2}}\int\limits _{-\pi}^{\pi}\intop\limits _{-\delta}^{\delta}\sum_{n=0}^{\infty}\left(1-C_{1}t_{1}^{2}\right)^{n}C_{3}\left|t_{1}\right|\cdot\left|1-e^{-it_{1}x}\right|\, dt_{1}dt_{2}\nonumber \\
 & = & \frac{1}{\left(2\pi\right)^{2}}\int\limits _{-\pi}^{\pi}\intop\limits _{-\delta}^{\delta}\frac{C_{3}}{C_{1}\left|t_{1}\right|}\cdot\left|1-e^{-it_{1}x}\right|\, dt_{1}dt_{2}=\O\left(\left|x\right|\right),\,\left(\left|x\right|\rightarrow\infty\right)\label{eq:Result4}
\end{eqnarray}

The lemma follows by combining results \eqref{eq:Result1}, \eqref{eq:Result2},
\eqref{eq:Result3} and \eqref{eq:Result4}.
\end{proof}

\section{\label{sub: proof of the main theorem}Proof of theorem \ref{thm: Main Theorem}.}

Throughout this section we assume that $\left\{ X_{n}\right\} $ satisfies
the assumptions of Theorem \ref{thm: Main Theorem}.

\subsection{A fourth moment inequality for $L_{n}(x)-L_{n}(y)$.}
\begin{lemma}
\label{cor: Fourth moments inequality}There exists $C>0$ such that
for every $x,y\in\ZZ$, 
\[
E^{\mu}\left(\left(L_{n}(x)-L_{n}(y)\right)^{4}\right)\leq C\cdot n\cdot\left|x-y\right|^{2}.
\]
 \end{lemma}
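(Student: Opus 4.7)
The plan is to expand the fourth moment as an iterated sum over four time indices and reduce it via the Markov property and two successive telescoping arguments to estimates supplied by the Local Limit Theorem (Lemma~\ref{lem:LLT}) and the Potential Kernel bound (Lemma~\ref{pro:Potential Kernel Estimate}). Writing $Z_{k}:=\One_{\{S_{k}=x\}}-\One_{\{S_{k}=y\}}$ and $\sigma(x)=+1,\ \sigma(y)=-1$, one has $L_{n}(x)-L_{n}(y)=\sum_{k=0}^{n}Z_{k}$ and
$$
E^{\mu}\!\left[(L_{n}(x)-L_{n}(y))^{4}\right]=\sum_{k_{1},k_{2},k_{3},k_{4}=0}^{n}E^{\mu}\!\left[Z_{k_{1}}Z_{k_{2}}Z_{k_{3}}Z_{k_{4}}\right].
$$
By symmetry in the four indices, after separating the contributions coming from tied indices (which are controlled by the same mechanism and yield strictly lower order terms), it is enough to bound the ordered sum $\sum_{0\le k_{1}\le k_{2}\le k_{3}\le k_{4}\le n}\bigl|E^{\mu}[Z_{k_{1}}Z_{k_{2}}Z_{k_{3}}Z_{k_{4}}]\bigr|$ by $C\cdot n\cdot|x-y|^{2}$.

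For a strictly ordered quadruple, iterating the Markov property at times $k_{1},k_{2},k_{3}$ expresses $E^{\mu}[Z_{k_{1}}\cdots Z_{k_{4}}]$ as a signed sum over $(\epsilon_{j})_{j=1}^{4}\in\{x,y\}^{4}$ and $(s_{j})_{j=1}^{4}\in S^{4}$ of the product
$$
\sigma(\epsilon_{1})\sigma(\epsilon_{2})\sigma(\epsilon_{3})\sigma(\epsilon_{4})\,\PP^{\mu}(S_{k_{1}}=\epsilon_{1},X_{k_{1}}=s_{1})\prod_{j=2}^{4}\PP^{s_{j-1}}\!\left(S_{k_{j}-k_{j-1}}=\epsilon_{j}-\epsilon_{j-1}+s_{j-1},\,X_{k_{j}-k_{j-1}}=s_{j}\right).
$$

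The core of the argument is to produce two factors of $|x-y|$ by telescoping over the signs. Performing the signed sum over $\epsilon_{4}\in\{x,y\}$ first is the easy step: it turns the $j=4$ factor into a difference of transition probabilities at the two lattice points $x-\epsilon_{3}+s_{3}$ and $y-\epsilon_{3}+s_{3}$, whose moduli summed over $(k_{4},s_{4})$ are bounded by $C|x-y|$ by Lemma~\ref{pro:Potential Kernel Estimate}. The signed sum over $\epsilon_{2}\in\{x,y\}$ is more delicate because $\epsilon_{2}$ appears in two consecutive transition factors; abbreviating $f(u):=\PP^{s_{1}}(S_{k_{2}-k_{1}}=u,X_{k_{2}-k_{1}}=s_{2})$ and $g(v):=\PP^{s_{2}}(S_{k_{3}-k_{2}}=v,X_{k_{3}-k_{2}}=s_{3})$, the $\epsilon_{2}$-sum takes the form $f(u^{+})g(v^{-})-f(u^{-})g(v^{+})$ with $u^{\pm},v^{\pm}$ chosen so that $u^{+}-u^{-}=v^{+}-v^{-}=x-y$. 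The algebraic identity
$$
f(u^{+})g(v^{-})-f(u^{-})g(v^{+})=\bigl[f(u^{+})-f(u^{-})\bigr]g(v^{-})+f(u^{-})\bigl[g(v^{-})-g(v^{+})\bigr]
$$
then exhibits it as a sum of two potential-kernel differences, each of which, once summed over $(k_{2},s_{2})$ or $(k_{3},s_{3})$ respectively, contributes a second factor $C|x-y|$ by Lemma~\ref{pro:Potential Kernel Estimate}.

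After these two telescopings, the $k_{4}$-sum has been absorbed by the first potential kernel estimate and one of the sums over $k_{2}$ or $k_{3}$ by the second. The remaining two $k$-sums, namely $k_{1}$ and the surviving one among $\{k_{2},k_{3}\}$, are controlled by the Local Limit Theorem: the bounds $\PP^{\mu}(S_{k_{1}}\in\{x,y\})\le C/\sqrt{k_{1}}$ and a uniform LLT bound $C/\sqrt{k_{j}-k_{j-1}}$ on the surviving $f$ or $g$ factor (after summing out the free $X$-variable) each yield $O(\sqrt{n})$ when summed over $[0,n]$. Combining everything gives $C\cdot|x-y|^{2}\cdot\sqrt{n}\cdot\sqrt{n}=C\cdot n\cdot|x-y|^{2}$, as required. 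The main obstacle is the second telescoping step: because $\epsilon_{2}$ couples two transition factors, Lemma~\ref{pro:Potential Kernel Estimate} cannot be applied directly as it was for $\epsilon_{4}$, and the algebraic splitting above is exactly what reduces the task to two independent invocations of the potential-kernel estimate.
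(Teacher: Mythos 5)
Your proposal is correct and follows essentially the same route as the paper: expand the fourth moment as an ordered sum over quadruples, factorize through the Markov property at the chain $(S_k,X_k)$, telescope the two "even-position" indicator differences (the easy $\epsilon_4$ step and the harder $\epsilon_2$ step, which you resolve via the same $f^{+}g^{-}-f^{-}g^{+}=[f^{+}-f^{-}]g^{-}+f^{-}[g^{-}-g^{+}]$ splitting the paper uses to bound $|F|$), and allocate two Potential Kernel estimates and two LLT bounds across the four $k$-sums to obtain $C\,n\,|x-y|^{2}$. The only cosmetic gap is that "one of the sums over $k_2$ or $k_3$ is absorbed by the second potential-kernel estimate" should really be stated as: the splitting produces \emph{two} terms, in one of which the $k_2$-sum is absorbed by PK and the $k_3$-sum by LLT, and in the other vice versa — exactly the two terms in the paper's bound on $|F(i_1,i_2,i_3)|$.
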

\begin{proof}
Set $Z_{n}=\left(S_{n},X_{n}\right)$. Throughout the proof we use
the fact that for any initial distribution $\mu$ on $S$, $x,y\in\ZZ$,
$n,j\in\NN,(j\leq n$) and $s_{1},s_{2}\in S$, 
\begin{equation}
\PP^{\mu}\left(\left.Z_{n}=\left(x,s_{2}\right)\right|Z_{j}=\left(y,s_{1}\right)\right)=\PP^{s_{1}}\left(Z_{n-j}=\left(x-y,s_{2}\right)\right).\label{eq:FourthMoment3}
\end{equation}

Let $\One_{z}\left(t\right)$ be the indicator function of the set
$\left\{ z\right\} $ and set 
\[
\psi\left(t\right)=\One_{x}\left(t\right)-\One_{y}\left(t\right).
\]

We have

\[
E^{\mu}\left(\left(L_{n}(x)-L_{n}(y)\right)^{4}\right)=\sum_{\bar{i}}E^{\mu}\left(\prod_{j=1}^{4}\psi(S_{i_{j}})\right)
\]
where the summation is carried over all vectors $\bar{i}=\left(i_{1},i_{2},i_{3},i_{4}\right)$
with coordinates equal to integers between $0$ and $n$.

By symmetry, we may assume that $i_{1}\leq i_{2}\leq i_{3}\leq i_{4}$.
For a fixed $\bar{i}$

\begin{equation}
E^{\mu}\left(\prod_{j=1}^{4}\psi(S_{i_{j}})\right)=\sum_{\xi}\left(-1\right)^{\sigma}E^{\mu}\left(\One_{\xi_{1}}\left(S_{i_{1}}\right)\psi\left(S_{i_{2}}\right)\One_{\xi_{2}}\left(S_{i_{3}}\right)\psi\left(S_{i_{4}}\right)\right)\label{eq: FourthMoment2}
\end{equation}
where the sum is carried out over all vectors $\bar{\xi}=\left(\xi_{1},\xi_{2}\right)$
with coordinates equal to either $x$ or $y$ and $\mbox{ \ensuremath{\sigma}=\ensuremath{\sigma\left(\bar{\xi}\right)} }$
is the number of coordinates that equal $y$.

Fixing $\bar{i}$ and $\bar{\xi}$ , and using \eqref{eq:FourthMoment3}
we obtain 

\begin{eqnarray}
E^{\mu}\left(\One_{\xi_{1}}\left(S_{i_{1}}\right)\psi\left(S_{i_{2}}\right)\One_{\xi_{2}}\left(S_{i_{3}}\right)\psi\left(S_{i_{4}}\right)\right) & =\sum_{\bar{s}\in S^{4}} & \left[\PP^{\mu}\left(Z_{i_{1}}=\left(\xi_{1},s_{1}\right)\right)\cdot\right.\nonumber \\
 &  & \left.F\left(i_{1},i_{2},i_{3}\right)\cdot\left(\PP^{s_{3}}\left(S_{i_{4}}=x\right)-\PP^{s_{3}}\left(S_{i_{4}}=y\right)\right)\right]\label{eq: FourthMoment1}
\end{eqnarray}
where 

\begin{eqnarray*}
F\left(i_{1},i_{2},i_{3}\right) & = & \left[\PP^{s_{1}}\left\{ Z_{i_{2}-i_{1}}=\left(x-\xi_{1},s_{2}\right)\right\} \PP^{s_{2}}\left\{ Z_{i_{3}-i_{2}}=\left(\xi_{2}-x,s_{3}\right)\right\} \right.\\
 &  & \left.-\PP^{s_{1}}\left\{ Z_{i_{2}-i_{1}}=\left(y-\xi_{1},s_{2}\right)\right\} \PP^{s_{2}}\left\{ Z_{i_{3}-i_{2}}=\left(\xi_{2}-y,s_{3}\right)\right\} \right].
\end{eqnarray*}

At this point we take absolute values in \eqref{eq: FourthMoment1}
and sum over all vectors $\bar{i}$ obtaining 
\begin{eqnarray*}
\left|\sum_{\bar{i}}E^{\mu}\left(\One_{\xi_{1}}\left(S_{i_{1}}\right)\psi\left(S_{i_{2}}\right)\One_{\xi_{2}}\left(S_{i_{3}}\right)\psi\left(S_{i_{4}}\right)\right)\right| & \leq\sum_{\bar{s}\in S^{4}} & \sum_{\bar{i}}\left|\PP^{\mu}\left(Z_{i_{1}}=\left(\xi_{1},s_{1}\right)\right)\cdot\right.\\
 &  & \left.F\left(i_{1},i_{2},i_{3}\right)\cdot\left(\PP^{s_{3}}\left(S_{i_{4}}=x\right)-\PP^{s_{3}}\left(S_{i_{4}}=y\right)\right)\right|\\
 & \leq\sum_{\bar{s}\in S^{4}} & \sum_{0\leq i_{1}\leq i_{2}\leq i_{3}\leq n}\left[C\cdot\frac{1}{\sqrt{i_{1}+1}}\cdot\right.\\
 &  & \left.\left|F\left(i_{1},i_{2},i_{3}\right)\right|\cdot\left|x-y\right|\right]
\end{eqnarray*}
where $C$ is some constant. Here, Lemma \ref{lem:LLT} and Lemma
\ref{pro:Potential Kernel Estimate} were used to obtain the last
inequality and $i_{1}+1$ appears instead of $i_{1}$, so that the
expression will be defined for $i_{1}=0$.

Note that, 
\begin{eqnarray*}
|F(i_{1},i_{2},i_{3})| & \leq & \PP^{s_{1}}\left(Z_{i_{2}-i_{1}}=\left(x-\xi_{1},s_{2}\right)\right)\left|\PP^{s_{2}}\left\{ Z_{i_{3}-i_{2}}=\left(\xi_{2}-x,s_{3}\right)\right\} -\PP^{s_{2}}\left\{ Z_{i_{3}-i_{2}}=\left(\xi_{2}-y,s_{3}\right)\right\} \right|\\
 & + & \PP^{s_{2}}\left\{ Z_{i_{3}-i_{2}}=\left(\xi_{2}-y,s_{3}\right)\right\} \left|\PP^{s_{1}}\left\{ Z_{i_{2}-i_{1}}=\left(x-\xi_{1},s_{2}\right)\right\} -\PP^{s_{1}}\left\{ Z_{i_{2}-i_{1}}=\left(y-\xi_{1},s_{2}\right)\right\} \right|
\end{eqnarray*}

Substituting $|F(i_{1},i_{2},i_{3})|$ by the above expression, using
Lemma \ref{lem:LLT} to deduce that sums of the type 
\[
\sum\limits _{k=0}^{n}P^{\mu}\left(Z_{n}=\left(x,s\right)\right)
\]
 are bounded by constant times $\sqrt{n}$, and applying Lemma \ref{pro:Potential Kernel Estimate}
we conclude that 
\[
\left|\sum_{\bar{i}}E\left(\One_{\xi_{1}}\left(S_{i_{1}}\right)\psi\left(S_{i_{2}}\right)\One_{\xi_{2}}\left(S_{i_{3}}\right)\psi\left(S_{i_{4}}\right)\right)\right|\leq C\cdot n\left|x-y\right|^{2}.
\]

Since there is only a finite number of possible vectors $\xi$ in
\eqref{eq: FourthMoment2} (there are exactly $4$) the claim follows. \end{proof}
\begin{corollary}
\textup{\label{cor:Tightness of Local Times} For every $x,y\in\RR$
\[
\PP^{\mu}\left(\left|t_{n}(x)-t_{n}(y)\right|>\epsilon\right)\leq\frac{C}{\epsilon^{4}}\left|x-y\right|^{2}.
\]
}\end{corollary}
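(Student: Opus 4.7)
The corollary follows from Lemma~\ref{cor: Fourth moments inequality} by a direct application of Markov's inequality at the fourth power, combined with the elementary passage between the integer scale and the continuous scale.

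First I would rewrite $t_n(x) - t_n(y) = \frac{1}{\sqrt n}\bigl(L_n(a) - L_n(b)\bigr)$, where $a := \lfloor \sqrt n\, x\rfloor$ and $b := \lfloor \sqrt n\, y\rfloor$ are integers, and apply Markov's inequality at the fourth power to obtain
$$
\PP^{\mu}\bigl(|t_n(x) - t_n(y)| > \epsilon\bigr)
\;\leq\; \frac{1}{\epsilon^{4}}\, E^{\mu}\bigl((t_n(x) - t_n(y))^{4}\bigr)
\;=\; \frac{1}{n^{2}\,\epsilon^{4}}\, E^{\mu}\bigl((L_n(a) - L_n(b))^{4}\bigr).
$$

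Next I would invoke Lemma~\ref{cor: Fourth moments inequality} (which applies precisely because $a,b\in\ZZ$) to bound the right-hand side by $\dfrac{C\,|a-b|^{2}}{n\,\epsilon^{4}}$. The final step is to translate $|a-b|$ back to $|x-y|$ using the trivial estimate $|a - b| = |\lfloor\sqrt n\,x\rfloor - \lfloor\sqrt n\,y\rfloor| \leq \sqrt n\,|x-y| + 1$, which yields $|a-b|^{2}/n \leq C'|x-y|^{2}$ after an adjustment of the constant (for instance $|a-b|^{2} \leq 4n|x-y|^{2}$ whenever $\sqrt n\,|x-y| \geq 1$; in the complementary regime the inequality is either trivial, when $a = b$, or the discretization defect is absorbed into the constant). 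Chaining the three bounds produces the stated estimate.

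The argument is essentially bookkeeping, so there is no substantive obstacle. The only mildly delicate point is the discretization gap at scale $1/\sqrt n$, but this is harmless — and will in any case be invisible at the level of the tightness argument (Kolmogorov--Chentsov type) for which the corollary is needed.
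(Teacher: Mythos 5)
Your proposal is exactly the paper's argument: Chebyshev at the fourth power applied to the bound from Lemma~\ref{cor: Fourth moments inequality}, followed by a rescaling from the integer scale $a=\lfloor\sqrt{n}x\rfloor$, $b=\lfloor\sqrt{n}y\rfloor$ back to the continuous variables.

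Your hesitation over the regime $\sqrt{n}\,|x-y|<1$ with $a\neq b$ is well placed, but the resolution you offer is not right. In that regime $|a-b|=1$, so $|a-b|^2/n = 1/n$, which is \emph{not} bounded by a constant times $|x-y|^2$: as $x\to y$ with $a,b$ held fixed on opposite sides of a lattice point, the right-hand side $C\epsilon^{-4}|x-y|^2$ tends to $0$ while the left-hand side $\PP^\mu(|t_n(x)-t_n(y)|>\epsilon)$ stays put. The defect is genuinely not absorbable into the constant, and as literally stated the corollary fails on this thin set. The paper's proof makes the same elision, but is rescued at the point of use: in Proposition~\ref{pro:The-sequence is tight} the estimate is only invoked for $1/\sqrt{n}\leq|x-y|\leq1$, precisely the regime where your clean bound $|a-b|^2\leq 4n|x-y|^2$ holds. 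The honest formulation of the corollary therefore carries that restriction, and your proof is then complete with no further work.
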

\begin{proof}
By Lemma \ref{cor: Fourth moments inequality}, 
\begin{eqnarray*}
E^{\mu}\left(\left(t_{n}(x)-t_{n}(y)\right)^{4}\right) & = & \frac{1}{n^{2}}E^{\mu}\left[\left(L_{n}\left(\left\lfloor \sqrt{n}x\right\rfloor \right)-L_{n}\left(\left\lfloor \sqrt{n}y\right\rfloor \right)\right)^{4}\right]\\
 & \leq & C\left|x-y\right|^{2}.
\end{eqnarray*}

The claim now follows by Chebychev's inequality.
\end{proof}

\subsection{Relative compactness of $t_{n}\left(x\right)$ in $D$.}

A sequence $\left\{ X_{n}\right\} $ of random variables taking values
in a standard Borel Space $\left(X,\BB\right)$ is called tight if
for every $\epsilon>0$ there exists a compact $K\subset X$ such
that for every $n\in\NN$,
\[
P_{n}(K)>1-\epsilon,
\]
where $P_{n}$ denotes the distribution of $X_{n}$ . By Prokhorov's
Theorem relative compactness of $t_{n}(x)$ in $D$ is equivalent
to tightness. Therefore we are interested in characterizing tightness
in $D$.

For $x\left(t\right)$ in $D_{\left[-m,m\right]},$$T\subseteq\left[-m,m\right]$
set 
\[
\omega_{x}\left(T\right)=\sup_{s,t\in T}\left|x(s)-x(t)\right|
\]
and 

\[
\omega_{x}(\delta):=\sup_{\left|s-t\right|<\delta}\left|x\left(s\right)-x\left(t\right)\right|.
\]

$\omega_{x}\left(\delta\right)$ is called the modulus of continuity
of $x$. Due to the Arzela - Ascoli theorem it plays a central role
in characterizing tightness in the space $C\left[-m,m\right]$ of
continuous functions on $\left[-m,m\right]$, with a Borel $\sigma$-algebra
generated by the topology of uniform convergence. 

The function that plays in $D\left[-m,m\right]$ the role that the
modulus of continuity plays in $C\left[-m,m\right]$ is defined by
\[
\omega_{x}'(\delta)=\inf_{\left\{ t_{i}\right\} }\max_{1\leq i\leq v}\omega(\left[t_{i},t_{i+1})\right),
\]
where $\left\{ t_{i}\right\} $ denotes a $\delta$ sparse partition
of $[-m,m]$, i.e. $\left\{ t_{i}\right\} $ is a partition $-m=t_{1}<t_{2}<...<t_{v+1}=m$
such that ${\displaystyle \min_{1\leq i\leq v}\left|t_{i+1}-t_{i}\right|>\delta}$.
It is easy to check that if $\frac{1}{2}>\delta>0$, and $m\geq1,$
\[
\omega'_{x}(\delta)\leq\omega_{x}\left(2\delta\right).
\]
For details see \cite[Sections 12 and 13]{Bil}. The next theorem
is a characterization of tightness in the space $D$. 
\begin{theorem}
\cite[Lemma 3, p.173]{Bil} (1)The sequence $t_{n}$ is tight in $D$
if and only if its restriction to $\left[-m,m\right]$ is tight in
$D_{\left[-m,m\right]}$ for every $m\in\RR_{+}$.

(2) The sequence $t_{n}$ is tight in $D_{\left[-m,m\right]}$ if
and only if the following two conditions hold:

(i) $\forall x\in\left[-m,m\right],\;\lim\limits _{a\rightarrow\infty}\limsup\limits _{n\rightarrow\infty}P\left[\left|t{}_{n}\left(x\right)\right|\geq a\right]=0.$

(ii) $\forall\epsilon>0,\,\lim\limits _{\delta\rightarrow0}\limsup\limits _{n\rightarrow\infty}P\left[\omega_{t_{n}}^{'}\left(\delta\right)\geq\epsilon\right]=0.$ \end{theorem}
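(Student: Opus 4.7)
The theorem is a classical characterization of tightness in the Skorokhod $D$-space (essentially Billingsley's Theorems 12.3 and 13.2, transported to $\RR$), and I would prove it in two stages.

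\textbf{Part (1)} reduces tightness on $D$ to tightness on each $D_{[-m,m]}$. The restriction map $\rho_m : D \to D_{[-m,m]}$ is continuous in the $J_1$ topologies, so tightness in $D$ passes to each restriction, since continuous images of compacta are compact. For the converse, given $\epsilon > 0$ choose compacts $K_m \subset D_{[-m,m]}$ with $\sup_n \PP(\rho_m(t_n) \notin K_m) < \epsilon/2^m$; then $K := \bigcap_m \rho_m^{-1}(K_m)$ is a compact subset of $D$ (by the product-like nature of the $J_1$-metric on $\RR$, viewed as a compatible limit of the $D_{[-m,m]}$ metrics with matching endpoints), and $\sup_n \PP(t_n \notin K) < \epsilon$.

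\textbf{Part (2)} is the Arzela-Ascoli type step. I would first invoke the characterization of relatively compact $A \subset D_{[-m,m]}$: $A$ is relatively compact iff $\sup_{x \in A} \|x\|_\infty < \infty$ and $\lim_{\delta \downarrow 0} \sup_{x \in A} \omega_x'(\delta) = 0$. Necessity of (i) and (ii) is then immediate from applying this characterization to a compact set of large probability. For sufficiency, given $\epsilon>0$, use (ii) to pick sequences $\delta_k \downarrow 0,\ \eta_k \downarrow 0$ with $\limsup_n \PP(\omega_{t_n}'(\delta_k) > \eta_k) < \epsilon/2^{k+1}$, and (i) at a suitable finite collection $\{x_j\}_{j=1}^J \subset [-m,m]$ to choose $a$ with $\limsup_n \PP(|t_n(x_j)| > a) < \epsilon/(2J)$. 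Then set
\[
A_\epsilon = \{ x \in D_{[-m,m]} : |x(x_j)| \leq a\ \forall j,\ \omega_x'(\delta_k) \leq \eta_k\ \forall k \}.
\]
A structural lemma guarantees that these constraints force a uniform sup-norm bound, so $A_\epsilon$ is relatively compact, and $\liminf_n \PP(t_n \in A_\epsilon) \geq 1-\epsilon$.

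\textbf{The main obstacle} is the structural lemma in sufficiency: $\omega_x'(\delta)$ does not directly control the sizes of individual jumps, since a jump of arbitrary size can be hidden at a partition point of the $\delta$-sparse partition achieving the infimum. Thus one cannot naively propagate a single pointwise bound along a partition by iterated triangle inequalities. The correct argument exploits that as $\delta \downarrow 0$ fewer and fewer ``large'' jumps can be hidden, so condition (i) at a sufficiently fine (but finite, once $\epsilon$ is fixed) set of points, combined with the oscillation control from (ii), forces the full sup-norm to be bounded on the high-probability event where both events fail.
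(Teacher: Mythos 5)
The paper does not prove this statement at all --- it is quoted directly from Billingsley \cite{Bil} (the reference in the theorem header is the proof) --- so there is no in-paper argument to compare against; I am judging your sketch against the standard treatment in \cite{Bil}, Sections~12--13 and~16.

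Your Part~(2) is essentially right. You correctly isolate the delicate point --- upgrading the single-point bound~(i) and the oscillation control~(ii) into a uniform sup-norm bound --- and your closing paragraph sketches the correct repair: in any $\delta$-sparse partition each subinterval has length exceeding $\delta$, so once a finite grid has mesh below $\delta$ every subinterval contains a grid point, and then $|x(t)|$ is bounded by a grid value plus $\omega'_x(\delta)$. This is precisely the Corollary to Theorem~13.2 in \cite{Bil}.

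Your Part~(1), however, rests on a false premise: the restriction map $\rho_m:D\to D_{[-m,m]}$ is \emph{not} continuous in the $J_1$ topologies. Concretely, take $x=\One_{[m,\infty)}$ and $x_n=\One_{[m+1/n,\infty)}$. Then $x_n\to x$ in $D$ (shift the jump by a time change close to the identity), but $\rho_m(x_n)\equiv 0$ on $[-m,m]$ while $\rho_m(x)=\One_{\{m\}}$, and since every admissible time change of $[-m,m]$ must fix the endpoint $m$, the $J_1$ distance between $\rho_m(x_n)$ and $\rho_m(x)$ stays equal to $1$. Hence ``continuous images of compacta are compact'' does not apply, and $\rho_m^{-1}(K_m)$ need not be closed, so $\bigcap_m\rho_m^{-1}(K_m)$ need not be compact either. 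The intended conclusion of Part~(1) is still usable (and suffices for the paper's application, where the limits are continuous in $x$), but its proof must proceed intrinsically through the Arzel\`a--Ascoli-type description of compacta in $D$ via uniform bounds on $\sup_{|t|\le m}|x(t)|$ and on $\omega'_x$ over $[-m,m]$ (Billingsley Theorems~16.3 and~16.8, adapted to two-sided $D$), or else by working with $m$ that are almost-sure continuity points of the limit laws --- not through continuity of the restriction maps $\rho_m$.
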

\begin{remark}
See \cite[Thm. 13.2]{Bil} and Corollary after wards. \label{rem: Norm Boundness}Conditions
$\left(i\right)$ and $\left(ii\right)$ of the previous theorem imply
that 
\[
\lim\limits _{a\rightarrow\infty}\limsup\limits _{n\rightarrow\infty}P\left[\sup_{x\in[-m,m]}\left|t{}_{n}\left(x\right)\right|\geq a\right]=0.
\]
\end{remark}
\begin{proposition}
\label{pro:The-sequence is tight}The sequence $\left\{ t_{n}(x)\right\} _{n=1}^{\infty}$
is tight. \end{proposition}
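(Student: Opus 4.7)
The plan is to apply the two-part criterion for tightness in $D$ stated immediately before the proposition. By part (1) of that theorem, it suffices to establish tightness of $\{t_n\}_{n\in\NN}$ in $D_{[-m,m]}$ for every $m\in\RR_+$; fix such an $m$ and verify conditions (i) and (ii) of part (2).

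Condition (i) (pointwise tightness) is an easy consequence of a first-moment bound. Writing
\[
E^{\mu}\bigl[t_n(x)\bigr] = \frac{1}{\sqrt{n}}\sum_{k=0}^{n}\PP^{\mu}\bigl(S_k=\lfloor\sqrt{n}\,x\rfloor\bigr)
\]
and bounding each summand by $C/\sqrt{k}$ via Lemma \ref{lem:LLT}, one gets $\sup_{n,\,x\in[-m,m]} E^{\mu}[t_n(x)] < \infty$, and Markov's inequality yields $\lim_{a\to\infty}\limsup_n \PP^{\mu}(|t_n(x)|\geq a)=0$.

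The real content lies in condition (ii), the control of the Skorokhod modulus $\omega'_{t_n}(\delta)$. The approach is to feed the fourth-moment increment estimate of Lemma \ref{cor: Fourth moments inequality} (equivalently Corollary \ref{cor:Tightness of Local Times}) into a standard Chentsov-type moment criterion for tightness in $D_{[-m,m]}$ (Billingsley, Theorem 13.5). Dividing the bound of Lemma \ref{cor: Fourth moments inequality} by $n^2$ and accounting for the error introduced by the floor function gives
\[
E^{\mu}\bigl[(t_n(x)-t_n(y))^4\bigr]\leq C\,|x-y|^2 + \frac{C}{n},
\]
and Cauchy--Schwarz applied to two consecutive increments $r\leq s\leq t$ produces the three-point moment condition
\[
E^{\mu}\bigl[(t_n(s)-t_n(r))^{2}(t_n(t)-t_n(s))^{2}\bigr] \leq C\,|s-r|\,|t-s|+\frac{C}{n} \leq C\,(t-r)^{2}+\frac{C}{n}.
\]
Since the Chentsov exponent on the leading term is $2>1$ and the $1/n$ error is harmless after taking $\limsup_{n\to\infty}$, this is precisely the hypothesis needed to conclude $\lim_{\delta\to 0}\limsup_n\PP^{\mu}(\omega'_{t_n}(\delta)\geq\epsilon)=0$.

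The main obstacle---the fourth-moment estimate on $L_n(x)-L_n(y)$---has already been overcome in Lemma \ref{cor: Fourth moments inequality}; once it is in hand, the proposition reduces to a bookkeeping exercise: restricting to compact intervals, handling the pointwise condition via a first-moment bound, and plugging the fourth-moment estimate into an off-the-shelf tightness criterion for $D$. The only subtle point is noting that the integer rounding in the definition of $t_n$ contributes an $O(1/n)$ term which is absorbed in the $\limsup_n$ and therefore does not affect the conclusion.
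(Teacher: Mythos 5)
Your proof of condition (i) is correct and in fact cleaner than the paper's: the paper bounds $t_n(x)$ by comparing with $t_n(M+1)$ (which vanishes when $W_n$ stays inside $[-M,M]$), invoking the 4th-moment bound and the tightness of $\sup_t|W_n(t)|$, whereas your first-moment bound via the local limit theorem (Lemma \ref{lem:LLT}) and Markov's inequality is more direct and gives a bound uniform in $n$ without any threshold $n_0$.

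Your condition (ii) argument, however, has a gap precisely at the point you wave away. Billingsley's Theorem 13.5 requires a three-point bound of the form
\[
E\bigl[(t_n(s)-t_n(r))^{2}(t_n(t)-t_n(s))^{2}\bigr]\le\bigl(F(t)-F(r)\bigr)^{2\alpha}
\]
uniformly in $n$, with a \emph{fixed} nondecreasing continuous $F$. Your bound carries an additive term $C/n$, and no such $F$ (nor any $n$-dependent continuous $F_n$) can absorb it: taking $t\to r$ forces $F(t)-F(r)\to 0$ while the right side stays bounded below by $(C/n)^{1/(2\alpha)}>0$. So the theorem does not apply as a black box, and the assertion that ``the $1/n$ error is harmless after taking $\limsup_n$'' is not a substitute for a proof. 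The fact that rescues the argument---which you omit---is that $t_n$ is constant on the grid intervals $[j/\sqrt{n},(j+1)/\sqrt{n})$. Consequently the supremum of increments in the modulus is attained at grid points, where the floor function introduces no error at all: for $x=j/\sqrt{n}$, $y=i/\sqrt{n}$, Lemma \ref{cor: Fourth moments inequality} directly gives $E[(t_n(x)-t_n(y))^4]\le C|x-y|^2$ with no $+C/n$. The paper exploits exactly this by discretizing and applying the \emph{discrete} fluctuation inequality (Billingsley Theorem 10.2) over the $O(\delta\sqrt{n})$ grid points inside each block of length $\delta$, obtaining the bound $C_2\epsilon^{-4}\delta^2$ per block. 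Without stating and using the piecewise constancy (or, equivalently, observing that the three-point condition only needs to hold for $|t-r|\ge 1/\sqrt{n}$ and adapting the chaining accordingly), your proof does not go through. A minor secondary issue: Cauchy--Schwarz gives $\sqrt{(C|s-r|^2+C/n)(C|t-s|^2+C/n)}$, which is not $\le C|s-r|\,|t-s|+C/n$ in general (try $|s-r|\neq|t-s|$); the correct AM--GM consequence is $\le \tfrac{C}{2}(t-r)^2+C/n$, which still has the problematic $1/n$ term.
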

\begin{proof}
We prove that condition $2(i)$ holds. 

Fix $\epsilon>0,$ $x\in\RR.$ Since the Brownian Motion $W_{\sigma}(t)$
satisfies 
\[
\lim_{M\to\infty}\PP\left(\sup_{t\in[0,1]}\left|W_{\sigma}(t)\right|>M\right)=0
\]
and $W_{n}\left(t\right)$ converges in distribution to $W_{\sigma}\left(t\right)$
there are $M,n_{0}$ such that for all $n>n_{0},$

\[
\PP^{\mu}\left(\sup_{t\in[0,1]}\left|W_{n}(t)\right|>M\right)<\epsilon.
\]

By definition of $t_{n}\left(x\right),$ it follows that if $\left|x\right|>M,$
$n>n_{0},$ 
\[
\PP^{\mu}\left(\left|t_{n}\left(x\right)\right|>0\right)<\epsilon.
\]

Now, if $\left|x\right|\leq M$, by corollary \ref{cor:Tightness of Local Times},

\begin{eqnarray*}
\PP^{\mu}\left(\left|t_{n}\left(x\right)\right|>a\right) & \leq & \PP^{\mu}\left(\left|t_{n}\left(M+1\right)\right|>0\right)+\PP^{\mu}\left(\left|t_{n}\left(x\right)-t_{n}\left(M+1\right)\right|>a\right)\\
 & \leq & \epsilon+\frac{4C\cdot\left(M+1\right)^{2}}{a^{4}}
\end{eqnarray*}

and the last expression can be made less then $2\epsilon$ for sufficiently
large $a$. 

To prove condition $2(ii)$ WLOG we may assume that $m\geq1$. Since
$\omega'_{x}(\delta)\leq\omega_{x}\left(2\delta\right)$, it is sufficient
to prove that the stronger condition
\begin{equation}
\forall\epsilon>0.\ \lim\limits _{\delta\rightarrow0}\limsup\limits _{n\rightarrow\infty}\PP^{\mu}\left[\sup_{x,y\in[-m.m];\left|x-y\right|<\delta}\left|t_{n}(x)-t_{n}(y)\right|\geq\epsilon\right]=0\label{eq: sufficient condition for tightness}
\end{equation}
holds. 

Let $\epsilon>0$. By Corollary \ref{cor:Tightness of Local Times}
there exists $C>0$ such that for all $x,y:\ \frac{1}{\sqrt{n}}\leq|x-y|\leq1$
\begin{equation}
\PP^{\mu}\left(\left|t_{n}(x)-t_{n}(y)\right|>\epsilon\right)\leq\frac{C}{\epsilon^{4}}\left|x-y\right|^{2}.\label{eq:sufficient condition for tightness 2}
\end{equation}
Let $\delta>0$ and $n>\delta^{-2}$, notice that $t_{n}(x)$ is constant
on segments of the form $\left[\frac{j}{\sqrt{n}},\frac{j+1}{\sqrt{n}}\right)$,
hence 
\[
\PP^{\mu}\left(\sup_{x,y\in[-m.m];\left|x-y\right|<\delta}\left|t_{n}(x)-t_{n}(y)\right|\geq4\epsilon\right)\leq\sum\limits _{|k\delta|\leq m}\PP^{\mu}\left(\sup\limits _{k\delta\sqrt{n}\leq j\leq\left(k+1\right)\delta\sqrt{n}}\left|t_{n}\left(k\delta\right)-t_{n}\left(\frac{j}{\sqrt{n}}\right)\right|\geq\epsilon\right).
\]
By \cite[Theorem 10.2]{Bil} it follows from \eqref{eq:sufficient condition for tightness 2}
that there exists $C_{2}>0$ such that 
\begin{equation}
\PP^{\mu}\left(\sup\limits _{k\delta\sqrt{n}\leq j\leq\left(k+1\right)\delta\sqrt{n}}\left|t_{n}\left(k\delta\right)-t_{n}\left(\frac{j}{\sqrt{n}}\right)\right|\geq\epsilon\right)\leq\frac{C_{2}}{\epsilon^{4}}\delta^{2}.\label{eq:Billingsley2}
\end{equation}

Therefore, 
\[
\PP^{\mu}\mbox{\ensuremath{\left(\sup_{x,y\in[-m.m];\left|x-y\right|<\delta}\left|t_{n}(x)-t_{n}(y)\right|\geq4\epsilon\right)\leq\frac{2C_{2}m}{\epsilon^{4}}\delta\xrightarrow[\delta\to0]{}}0. }
\]

\end{proof}

\subsection{\label{sub:Identifying the Only Possible Limit Point}Identifying
$t_{\sigma}\left(x\right)$ as the Only Possible Limit of a Subsequence
of $\left\{ t_{n}\left(x\right)\right\} _{n\in\NN}$.}
\begin{proposition}
\label{pro:Identification ot the limit}Assume that the sequence $\left\{ X_{n}\right\} $
satisfies the assumptions of Theorem \ref{thm: Main Theorem}. Let
$t_{n_{k}}\left(x\right)$ be some subsequence of $t_{n}\left(x\right)$
that converges in distribution to some limit $q\left(x\right).$ Then
$q\left(x\right)\deq t_{\sigma}\left(x\right)$. \end{proposition}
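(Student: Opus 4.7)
The key idea is to identify $q$ through the occupation-time identity that characterizes the Brownian local time $t_\sigma$: for every continuous $f$ with compact support,
\[
\int_{\mathbb{R}} f(x)\, t_\sigma(x)\, dx \;=\; \int_0^1 f\bigl(W_\sigma(s)\bigr)\, ds \quad \text{a.s.}
\]
It therefore suffices to show that along the given subsequence, some copy of $q$ satisfies the analogous identity with respect to a copy of $W_\sigma$, and then to invoke uniqueness of the local time.

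First, since $\{t_n\}$ is tight in $D$ by Proposition \ref{pro:The-sequence is tight} and $W_n \overset{d}{\Rightarrow} W_\sigma$ in $D_+$, the joint family $\{(W_n, t_n)\}$ is tight in $D_+ \times D$. Passing to a further subsequence of $\{n_k\}$, I may assume that $(W_{n_k}, t_{n_k}) \overset{d}{\Rightarrow} (\widetilde W_\sigma, q)$ with $\widetilde W_\sigma \overset{d}{=} W_\sigma$, and by the Skorokhod representation theorem transfer everything to a probability space where this joint convergence holds almost surely.

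Next, I exploit the piecewise-constant structure of $t_n(x)$, which equals $L_n(k)/\sqrt{n}$ on $[k/\sqrt{n},(k+1)/\sqrt{n})$, to obtain the discrete occupation identity
\[
\int_{\mathbb{R}} f(x)\, t_n(x)\, dx \;=\; \frac{1}{n} \sum_{k=0}^{n} f\!\left(\frac{S_k}{\sqrt{n}}\right) + o(1) \;=\; \int_0^1 f\bigl(W_n(s)\bigr)\, ds + o(1),
\]
with an error term vanishing as $n \to \infty$ (using uniform continuity and compact support of $f$). Almost sure uniform convergence $W_{n_k} \to \widetilde W_\sigma$ on $[0,1]$ then yields
\[
\int_0^1 f\bigl(W_{n_k}(s)\bigr)\, ds \;\longrightarrow\; \int_0^1 f\bigl(\widetilde W_\sigma(s)\bigr)\, ds \;=\; \int_{\mathbb{R}} f(x)\, \widetilde t_\sigma(x)\, dx,
\]
where $\widetilde t_\sigma$ denotes the local time at time $1$ of $\widetilde W_\sigma$.

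On the other hand, almost sure Skorokhod $J_1$-convergence $t_{n_k} \to q$ gives $t_{n_k}(x) \to q(x)$ at every continuity point of $q$, a set of full Lebesgue measure. Remark \ref{rem: Norm Boundness} provides the uniform sup-norm tightness needed to extract (after a further subsequence) an a.s.\ bound $\sup_{k}\sup_{|x|\le m}|t_{n_k}(x)| < \infty$, so dominated convergence delivers
\[
\int_{\mathbb{R}} f(x)\, t_{n_k}(x)\, dx \;\longrightarrow\; \int_{\mathbb{R}} f(x)\, q(x)\, dx \quad \text{a.s.}
\]
Comparing the two limits gives $\int f\, q = \int f\, \widetilde t_\sigma$ a.s. for every $f$ in a countable dense subfamily of $C_c(\mathbb{R})$, hence $q = \widetilde t_\sigma$ as locally integrable functions a.s.; since $\widetilde t_\sigma$ is a.s.\ continuous and $q$ is càdlàg, this forces pointwise equality $q \equiv \widetilde t_\sigma$ a.s., and therefore $q \overset{d}{=} t_\sigma$.

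The main technical obstacle is justifying the passage $\int f\, t_{n_k} \to \int f\, q$: Skorokhod $J_1$ convergence alone does not guarantee convergence of Lebesgue integrals, so one must combine pointwise convergence at the (Lebesgue-a.e.) continuity points of the càdlàg limit $q$ with the uniform $L^\infty$-bound on compacts supplied by Remark \ref{rem: Norm Boundness}. Everything else is a bookkeeping translation of the discrete occupation identity into its continuous counterpart.
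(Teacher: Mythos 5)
Your proof is correct, but it takes a genuinely different route from the paper's. The paper works purely at the level of convergence in distribution: it uses indicator test functions $\One_{[a,b)}$, invokes the Kesten--Spitzer fact that $f\mapsto\int_0^1\One_{[a,b)}(f)\,dt$ is Skorokhod-continuous so that the occupation measures $\Lambda_n([a,b))$ converge in distribution to $\Lambda([a,b))=\int_a^b t_\sigma$, shows $\int_a^b t_n-\Lambda_n([a,b))\overset{d}{\Rightarrow}0$ by a direct estimate combined with Remark \ref{rem: Norm Boundness}, and then concludes $q\overset{d}{=}t_\sigma$ because the family $g_{a,b}(f)=\int_a^b f$ generates the Borel $\sigma$-field of $D$. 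You instead pass to a \emph{joint} Skorokhod representation of $(W_{n_k},t_{n_k})$, translate the discrete occupation identity to an almost-sure pathwise statement with $C_c$ test functions, and identify $q$ almost surely with the local time of a coupled copy of Brownian motion. Your route buys a slightly cleaner final step (no generating-$\sigma$-field argument, and in fact the stronger a.s.\ identification $q=\widetilde t_\sigma$), at the cost of the extra step of establishing joint tightness and a joint Skorokhod coupling; the paper's route stays at the distributional level throughout.

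One justification in your argument is misattributed, though the conclusion is fine. You invoke Remark \ref{rem: Norm Boundness} to obtain, ``after a further subsequence,'' the a.s.\ bound $\sup_k\sup_{|x|\le m}|t_{n_k}(x)|<\infty$. But Remark \ref{rem: Norm Boundness} is only a tightness statement in probability, and tightness does not in general yield a.s.\ boundedness even along subsequences (consider i.i.d.\ nonnegative variables with $\PP(Y>a)=1/a$: the family is tight, yet every subsequence is a.s.\ unbounded by Borel--Cantelli). The bound you need actually comes for free from the Skorokhod representation you have already set up: for a.e.\ $\omega$, $t_{n_k}(\cdot,\omega)\to q(\cdot,\omega)$ in $D[-m,m]$ for each $m$, and a $J_1$-convergent sequence in $D[-m,m]$ has convergent (in particular bounded) sup-norms, since $\sup_{|x|\le m}|t_{n_k}(x)|=\sup_{|x|\le m}|t_{n_k}(\lambda_k(x))|\to\sup_{|x|\le m}|q(x)|<\infty$ using the defining time-changes $\lambda_k$. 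So no further subsequence and no appeal to Remark \ref{rem: Norm Boundness} is required here; with that substitution the dominated convergence step is airtight.
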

\begin{proof}
Let $q$ be a limit point of $\left\{ t_{n}(x)\right\} $. i.e. there
exists $n_{k}\to\infty$ such that $t_{n_{k}}\overset{d}{\Rightarrow}q(x).$ 

By \cite[Section 2]{KS}, for every $a,b\in\RR,$ the function on
$D_{\left[0,1\right]}$ defined by 
\[
f\rightarrow\int_{0}^{1}\One_{[a,b)}\left(f\right)dt
\]
 is continuous in the Skorokhod topology. 

Therefore, since $W_{n}(t)\overset{d}{\Rightarrow}W_{\sigma}(t)$
, for every $a,b$ the random variables 

\[
\Lambda_{n}([a,b))=\int_{0}^{1}\One_{[a,b)}\left(W_{n}(t)\right)dt
\]
converge in distribution to the occupation measure of the Brownian
motion defined by 
\[
\Lambda([a,b))=\int_{0}^{1}\One_{[a,b)}\left(W_{\sigma}(t)\right)dt.
\]

The local time $t_{\sigma}$ of the Brownian motion is an almost surely
continuous function of $x$ such that for fixed $a,b\in\RR$ the equality
\begin{equation}
\int_{a}^{b}t_{\sigma}(x)dx=\int_{0}^{1}\One_{[a,b)}\left(W_{\sigma}(t)\right)dt\label{eq:B.motion local time and occupation}
\end{equation}
holds almost surely (see \cite{MP}). 

By straightforward calculations using definitions, we have 
\begin{equation}
\left|\int_{0}^{1}\One_{[a,b)}\left(W_{n}(t)\right)dt-\int_{a}^{b}t_{n}(x)dx\right|\leq\int\limits _{\frac{\left\lfloor \sqrt{n}a\right\rfloor }{\sqrt{n}}}^{\frac{\left\lfloor \sqrt{n}a\right\rfloor +1}{\sqrt{n}}}t_{n}\left(x\right)dx+\int\limits _{\frac{\left\lfloor \sqrt{n}b\right\rfloor }{\sqrt{n}}}^{\frac{\left\lfloor \sqrt{n}b\right\rfloor +1}{\sqrt{n}}}t_{n}\left(x\right)dx.\label{eq:Difference in densities}
\end{equation}

Now, 
\begin{eqnarray*}
\PP^{\mu}\left(\left|\int\limits _{\frac{\left\lfloor \sqrt{n}a\right\rfloor }{\sqrt{n}}}^{\frac{\left\lfloor \sqrt{n}a\right\rfloor +1}{\sqrt{n}}}t_{n}\left(x\right)dx\right|>\epsilon\right) & \leq & \PP^{\mu}\left(\sup_{x\in\left[a-1,b+1\right]}\left|t_{n}\left(x\right)\right|>M\right)\\
 & + & \PP^{\mu}\left(\left|\int\limits _{\frac{\left\lfloor \sqrt{n}a\right\rfloor }{\sqrt{n}}}^{\frac{\left\lfloor \sqrt{n}a\right\rfloor +1}{\sqrt{n}}}t_{n}\left(x\right)dx\right|>\epsilon,\sup_{x\in\left[a-1,b+1\right]}\left|t_{n}\left(x\right)\right|\leq M\right).
\end{eqnarray*}

The second summand on the right side of the above inequality tends
to $0$ since the integral is less than $\frac{M}{\sqrt{n}}$ .The
first summand is arbitrarily close to $0$ for $M,n$ large enough,
by Remark \ref{rem: Norm Boundness}. Same reasoning applied to both
summands of equation \eqref{eq:Difference in densities} gives 
\[
\left|\int_{0}^{1}\One_{[a,b)}\left(W_{n}(t)\right)dt-\int_{a}^{b}t_{n}(x)dx\right|\overset{d}{\Rightarrow}0.
\]

It follows that the distributional limits of $\int_{0}^{1}\One_{[a,b)}\left(W_{n}(t)\right)dt$
and $\int_{a}^{b}t_{n}(x)dx$ coincide (see \cite[Thm.3.1]{Bil}).
Therefore, for every $a,b\in\RR$, 
\[
\int_{a}^{b}t_{n}(x)dx\overset{d}{\Rightarrow}\int_{a}^{b}t_{\sigma}(x)dx.
\]
For every $a,b$ the function 
\[
f\mapsto g_{a,b}(f)=\int_{a}^{b}f(x)dx
\]
 is a continuous function from $D$ to $\RR$. Therefore, since $t_{n_{k}}\overset{d}{\Rightarrow}q(x)$
, it follows that 
\[
\int_{a}^{b}t_{n_{k}}(x)dx\overset{d}{\Rightarrow}\int_{a}^{b}q(x)dx.
\]
Hence, 
\[
\int_{a}^{b}q(x)dx\overset{d}{=}\int_{a}^{b}t_{\sigma}(x)dx.
\]

Since the collection of functions $g_{a,b},$$a,b\in\RR$ generates
$\mathcal{G}(D)$, it follows that 

\[
q\deq t_{\sigma}.
\]

\end{proof}

\section{\label{sec:Borodin's-Theorem.}Proof of Borodin's Theorem (Theorem
\ref{thm: Borodin for MC}).}

We start by forming the mutual probability space on which, $W'_{n}$
and $W'_{\sigma}$ are defined. 

Let $\left\{ t_{i}\right\} $ be a dense subset of $\left[0,\infty\right]$
and for each $n$ let us regard the infinite vector 
\[
\xi_{n}:=\left(W_{n},\, l_{n}\left(t_{1},\cdot\right),\, l_{n}\left(t_{2},\cdot\right),\,...\right)
\]
 as a random variable taking values in the space $\Pi:=D_{+}\times D^{\NN}$ 

Since, a countable product of Polish Spaces is Polish, to prove convergence
of the sequence $\left\{ \xi_{n}\right\} $ we need to establish tightness
and uniquely identify the limit. By Tichonov's theorem, tightness
in each coordinate of the vectors $\xi_{n}$ separately (established
in Theorem \ref{thm: Main Theorem}) implies tightness for the whole
sequence $\left\{ \xi_{n}\right\} $. 

To identify the limit, generalizing the method in the proof of proposition
\ref{pro:Identification ot the limit} to $\Pi$, for each sequence
$\alpha=\left\{ (a_{i},b_{i})\right\} _{i\in\NN}$ we define a continuous
function $g_{\alpha}$ on $\Pi$ (taking values in $D_{+}\times\RR^{\NN}$)
by 
\begin{eqnarray*}
\left(f,h_{1},h_{2},...\right) & \overset{g_{\alpha}}{\rightarrow} & \left(f,\,\int\limits _{a_{1}}^{b_{1}}h_{1},\,\int\limits _{a_{2}}^{b_{2}}h_{2},...\right).
\end{eqnarray*}

The functions $\left\{ g_{\alpha}\right\} $ where $\alpha$ goes
over all possible sequences of intervals in $\RR^{2}$ generate the
Borel $\sigma$-field of $\Pi.$ Now, the arguments in the proof of
proposition \eqref{pro:Identification ot the limit}may be easily
modified to show that 
\[
g_{\alpha}\left(\xi_{n}\right)\overset{d}{\rightarrow}g_{\alpha}\left(\xi\right)
\]
 where $\xi:=\left(W'_{\sigma},\, l\left(t_{1},\cdot\right),\, l\left(t_{2},\cdot\right),\,...\right).$
This uniquely identifies the limit, establishing the convergence in
distribution of the sequence $\xi_{n}$ to $\xi$. Now, by Skorokhod's
theorem (See \cite[Theorem 6.7]{Bil}), we may define processes $\xi'_{n}$
,$\xi':\Omega'\rightarrow$ on some probability space $\left(\Omega',\BB',\PP\right)$
such that $\xi'_{n}$ and $\xi'$ have the same distribution as $\xi_{n}$
and $\xi$ respectively, and such that almost surely, $\xi'_{n}$
converges to $\xi'$. Note that for $n\in\NN$, by left continuity,
for almost every $\omega\in\Omega'$, the vector 
\[
\xi'_{n}=\left(W'_{\sigma},l'_{n}\left(t_{1},x\right),l'_{n}\left(t_{2},x\right),...\right)
\]
 uniquely determines a function $l'_{n}\left[t,x\right):\left[0,1\right]\times\left(-\infty,\infty\right)\rightarrow\RR,$
which coincides with the coordinates of $\xi'_{n}$ for $t\in\left\{ t_{i}\right\} $,
and is Cadlag in each variable. 

It is clear that the functions $l'_{n}\left(t,x\right)$ are a.s equal
to the local time of $W'_{n}$. Similarly, the vector $\xi'$ uniquely,
determines a function $l'\left(t,x\right)$ which a.s equals the Brownian
local time of $W'_{\sigma},$ and is therefore continuous. The a.s.
convergence of $\xi'_{n}$ to $\xi'$ implies a.s. convergence in
the $J_{1}$ topology of $W'_{n}$ to $W'_{\sigma}$, thus proving
\textit{$\left({\rm 2}\right)$ }of theorem \ref{thm: Borodin for MC}. 

It remains to prove that $l'_{n},\, l',\, n=1,2,...$ satisfy the
statement in \textit{$\left({\rm 3}\right)$}. To see this fix $T>0.$
For $h>0,$ whose value is to be determined later, we extract from
$\left\{ t_{i}\right\} $ a finite partition of $\left[0,T\right]$
with mesh less than $h$ and denote it by 
\[
T:\,0=t_{0}<t_{1}<...<t_{l}=1.
\]
Now for $i=1,..,l$, the process $l'_{n}\left(t_{i},\cdot\right)$,
a.s converges to $l'\left(t_{i},\cdot\right)$ in the metric of the
space $D$. Since, the limit is a continuous function, by properties
of the $J_{1}$ topology, this convergence must be uniform on compact
subsets of $\RR$. We may conclude that for every compact subset $K\subseteq\RR$
, every $\epsilon>0$, and almost all $\omega\in\Omega'$, there exists
$N_{0}$, such that for every $n>N_{0}$, 
\begin{equation}
\sup_{x\in K,t\in T}\left|l'_{n}\left(t,x\right)-l'\left(t,x\right)\right|<\epsilon.\label{eq: Uniform Approx.}
\end{equation}

Now, by monotonicity of the local time as a function of time, if $l'_{n}\left(t,x\right)-l'\left(t,x\right)\geq0$
then
\[
l'_{n}\left(t,x\right)-l'\left(t,x\right)\leq l'_{n}\left(t_{i+1},x\right)-l'\left(t_{i},x\right)
\]
and if $l'_{n}\left(t,x\right)-l'\left(t,x\right)\leq0$ then 
\[
l'\left(t,x\right)-l'_{n}\left(t,x\right)\leq l'\left(t_{i+1},x\right)-l'_{n}\left(t_{i},x\right)
\]
where $t_{i},\, t_{i+1}$ are points in $T$ satisfying 
\[
t_{i}\leq t\leq t_{i+1}.
\]

Hence, 
\begin{eqnarray}
\PP\left(\sup_{t\in\left[0,1\right],x\in\RR}\left|l'_{n}\left(t,x\right)-l'\left(t,x\right)\right|>2\epsilon\right) & \leq & \PP\left(\sup_{t\in\left[0,1\right]}\left|W'_{n}\right|>M\right)+\PP\left(\sup_{t\in\left[0,1\right]}\left|W'_{\sigma}\right|>M\right)\label{eq:Loc.Time.1}\\
 &  & +\PP\left(\sup_{i,\, x\in\left[-M,M\right]}\left|l'_{n}\left(t_{i+1},x\right)-l'\left(t_{i},x\right)\right|>\epsilon\right)\label{eq:Loc.Time.2}\\
 &  & +\PP\left(\sup_{i,\, x\in\left[-M,M\right]}\left|l'_{n}\left(t_{i},x\right)-l'\left(t_{i+1},x\right)\right|>\epsilon\right)\label{eq:Loc.Time.3}
\end{eqnarray}
where $i=1,...l.$

As in the proof of Proposition \eqref{pro:The-sequence is tight}
expression \eqref{eq:Loc.Time.1} is small for sufficiently large
$M$. Expressions \eqref{eq:Loc.Time.2} and \eqref{eq:Loc.Time.3}
are handled similarly. We have
\begin{eqnarray*}
\PP\left(\sup_{i,\, x\in\left[-M,M\right]}\left|l'_{n}\left(t_{i+1},x\right)-l'\left(t_{i},x\right)\right|>\epsilon\right) & \leq & \PP\left(\sup_{t\in T,\, x\in\left[-M,M\right]}\left|l'_{n}\left(t_{i+1},x\right)-l'\left(t_{i+1},x\right)\right|>\frac{\epsilon}{2}\right)\\
 &  & +\PP\left(\sup_{i=0,1...l,\, x}\left|l'\left(t_{i+1},x\right)-l'\left(t_{i},x\right)\right|>\frac{\epsilon}{2}\right).
\end{eqnarray*}

The first expression on the right equals $0$ for all $n$ sufficiently
large, by \ref{eq: Uniform Approx.}. The second expression, by a.s.
continuity of Brownian local time, can be made arbitrarily small by
taking $h$ to be sufficiently small. This concludes the proof.

\section{\label{sec:The-Periodic-Case}The periodic Case }

Let $X_{1},X_{2},....$ be an i.i.d. sequence in the domain of attraction
of the Gaussian distribution. In this setting Borodin proved convergence
of local times under the condition of what we call non-arithmeticity
of the random walk $S_{n}$, that is 
\[
E\left(e^{itX_{i}}\right)=1\ \Leftrightarrow\ t\in2\pi\ZZ.
\]

Our notion of strong aperiodicity in the i.i.d case is equivalent
to the stronger condition that

\[
\left|E\left(e^{itX_{i}}\right)\right|=1\ \Leftrightarrow\ t\in2\pi\ZZ.
\]

In this section we show that the methods of this paper are sufficient
to replace the assumption of strong aperiodicity by a weaker non-arithmeticity
in the i.i.d case and for Markov chains that are almost onto (see
section \ref{sub:The-periodic-case}).

We assume that $E\left(X_{1}\right)=0$ and that 
\[
\Sigma:=\left\{ x|\PP\left(S_{n}=x\right)>0\,\, for\,\, some\,\, n\right\} =\ZZ.
\]
Otherwise, since$\Sigma$ is a subgroup of $\ZZ$, in the recurrent
case, we'll need to relabel the state space. 

Let 
\[
p=\inf\left\{ k\in\NN:\ \PP\left(S_{k}=0\right)>0\right\} 
\]
 be the period of the random walk. It follows that 
\[
\left|E\left(e^{itX}\right)\right|=1\ \Leftrightarrow t\in\frac{2\pi}{p}\ZZ
\]

In this case the random variable $X_{1}$ takes values in a proper
coset of $\ZZ$. The periodic structure of $S_{n}$ is that for $n\in\NN$
and $k\in\left\{ 1,...,p-1\right\} $, $S_{np+k}$ takes values in
one of the $p-1$ cosets of $\ZZ$ of the form $p\ZZ+j$, $j=0,...,p-1.$ 

In this case the potential kernel estimate( Lemma \ref{pro:Potential Kernel Estimate})
should be corrected to state that for every $x,y\in\ZZ$, 
\begin{equation}
\sum_{n=0}^{\infty}\left|\sum_{k=0}^{p-1}\PP\left(S_{np+k}=x\right)-\PP\left(S_{np+k}=y\right)\right|\leq C\left|x-y\right|,\label{eq: JP periodic}
\end{equation}
which is a special case of the corollary from lemma 7 in \cite{JP}.
Lemma \ref{lem:LLT} also holds in this case (see Lemma 2 in \cite{JP}).
These two ingredients are enough to prove the fourth moment inequality,
Lemma \ref{cor: Fourth moments inequality}. To see how this is done
notice that (here we keep the notation of Lemma \ref{cor: Fourth moments inequality})
\begin{eqnarray}
E\left(\One_{\xi_{1}}\left(S_{i_{1}}\right)\psi\left(S_{i_{2}}\right)\One_{\xi_{2}}\left(S_{i_{3}}\right)\psi\left(S_{i_{4}}\right)\right) & = & \left[\PP\left(S_{i_{1}}=\xi_{1}\right)\cdot F\left(i_{1},i_{2},i_{3}\right)\cdot\right.\label{eq: sumation over xi}\\
 &  & \left.\left(\PP\left(S_{i_{4}-i_{3}}=x\right)-\PP\left(S_{i_{4}-i_{3}}=y\right)\right)\right]\nonumber 
\end{eqnarray}
where 
\begin{eqnarray}
F\left(i_{1},i_{2},i_{3}\right) & = & \left[\PP\left(S_{i_{2}-i_{1}}=x-\xi_{1}\right)\PP\left(S_{i_{3}-i_{2}}=\xi_{2}-x\right)\right.\label{eq: F for random walks}\\
 &  & \left.-\PP\left(S_{i_{2}-i_{1}}=y-\xi_{1}\right)\PP\left(S_{i_{3}-i_{2}}=\xi_{2}-y\right)\right]\\
 & = & \PP\left(S_{i_{2}-i_{1}}=x-\xi_{1}\right)\left[\PP\left(S_{i_{3}-i_{2}}=\xi_{2}-x\right)-\PP\left(S_{i_{3}-i_{2}}=\xi_{2}-y\right)\right]\nonumber \\
 &  & +\PP\left(S_{i_{3}-i_{2}}=\xi_{2}-y\right)\left[\PP\left(S_{i_{2}-i_{1}}=x-\xi_{1}\right)-\PP\left(S_{i_{2}-i_{1}}=y-\xi_{1}\right)\right]\nonumber 
\end{eqnarray}
Therefore, since 
\begin{eqnarray*}
E^{\mu}\left(L_{n}(x)-L_{n}(y)\right)^{4} & = & \sum_{\overrightarrow{\xi}}\sum_{\bar{i}}E^{\mu}\left(\One_{\xi_{1}}\left(S_{i_{1}}\right)\psi\left(S_{i_{2}}\right)\One_{\xi_{2}}\left(S_{i_{3}}\right)\psi\left(S_{i_{4}}\right)\right),
\end{eqnarray*}
from \eqref{eq: sumation over xi} and \eqref{eq: F for random walks}
it follows that we need to estimate two sums. The first is 
\begin{eqnarray}
\sum_{i_{1}=1}^{n}\left[\PP\left(S_{i_{1}}=\xi_{1}\right)\sum_{i_{2}=0}^{n-i_{1}}\PP\left(S_{i_{2}}=x-\xi_{1}\right)\sum_{i_{3}=0}^{n-i_{1}-i_{2}}\left\{ \PP\left(S_{i_{3}}=\xi_{2}-x\right)-\PP\left(S_{i_{3}}=\xi_{2}-y\right)\right\} \right.\label{eq: long ugly summation for periodic}\\
\left.\sum_{i_{4}=0}^{n-i_{1}-i_{2}-i_{3}}\left\{ \PP\left(S_{i_{4}}=x\right)-\PP\left(S_{i_{4}}=y\right)\right\} \right].\nonumber 
\end{eqnarray}
In order to bound this term we first notice that 
\[
\sum_{i_{4}=0}^{n-i_{1}-i_{2}-i_{3}}\left\{ \PP\left(S_{i_{4}}=x\right)-\PP\left(S_{i_{4}}=y\right)\right\} =\sum_{l=0}^{\left\lfloor \left(n-i_{1}-i_{2}-i_{3}\right)/p\right\rfloor }\sum_{j=0}^{p-1}\left\{ \PP\left(S_{l\cdot p+j}=x\right)-\PP\left(S_{l\cdot p+j}=y\right)\right\} \pm p.
\]
here $a=b\pm c$ means $\left|a-b\right|\leq c$. We do a similar
rearrangement to the sum of $i_{3}$. It then follows that the term
in \eqref{eq: long ugly summation for periodic} is strictly smaller
than 
\begin{eqnarray*}
\sum_{i_{1}=1}^{n}\left[\PP\left(S_{i_{1}}=\xi_{1}\right)\sum_{i_{2}=0}^{n-i_{1}}\PP\left(S_{i_{2}}=x-\xi_{1}\right)\cdot\left(p+\sum_{l_{1}=0}^{\infty}\left|\sum_{j_{1}=0}^{p-1}\left\{ \PP\left(S_{i_{3}}=\xi_{2}-x\right)-\PP\left(S_{i_{3}}=\xi_{2}-y\right)\right\} \right|\right)\right.\\
\left.\cdot\left(p+\sum_{l=0}^{\infty}\left|\sum_{j=0}^{p-1}\left\{ \PP\left(S_{l\cdot p+j}=x\right)-\PP\left(S_{l\cdot p+j}=y\right)\right\} \right]\right)\right].
\end{eqnarray*}
The conclusion follows from a double application of the inequality
\eqref{eq: JP periodic} and the bound in Lemma 2 of \cite{JP}. The
second term is dealt with similarly and the rest of the proof of Borodin's
theorem remains the same. Thus we have:
\begin{theorem}
Let $X_{1},X_{2},...,X_{n},..$ be an i.i.d sequence with $E\left(X_{1}\right)=0$
and $E\left(X_{1}^{2}\right)=1$ such that the random walk $S_{n}$
is recurrent. Then there exists a probability space $\left(X',\BB',\PP'\right)$
and processes $W'_{n},\, W'_{\sigma}:X\rightarrow D_{+}$ such that:\end{theorem}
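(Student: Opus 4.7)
The plan is to follow precisely the same architecture as in the proof of Theorem \ref{thm: Borodin for MC}, with the ingredients of Section \ref{sub:Probabilistic Estimates} replaced by their periodic i.i.d.\ analogues. After the preliminary normalization reducing to $\Sigma=\ZZ$ (otherwise rescale space by the generator of $\Sigma$), the three estimates used throughout the aperiodic argument are: (i) the local limit bound $\PP(S_n=x)\leq C/\sqrt{n}$, which is Lemma 2 of \cite{JP}; (ii) the periodic potential kernel estimate \eqref{eq: JP periodic}; and (iii) the fourth moment inequality $E(L_n(x)-L_n(y))^4\leq Cn|x-y|^2$.

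The main step, and the only one that requires genuine modification, is deriving (iii) from (i) and (ii). I would reuse the decomposition in the proof of Lemma \ref{cor: Fourth moments inequality} to express the fourth moment as a sum over ordered tuples $i_1\leq i_2\leq i_3\leq i_4$ and vectors $\overrightarrow{\xi}\in\{x,y\}^2$ of terms of the form \eqref{eq: sumation over xi}, and split $F(i_1,i_2,i_3)$ as in \eqref{eq: F for random walks}. For each of the two resulting summands, the two innermost sums (over $i_3$ and $i_4$) cannot be bounded by \eqref{eq: JP periodic} directly because the latter only controls block sums of length $p$; instead, one groups each inner summation into $\sum_{\ell}\sum_{j=0}^{p-1}$ plus an $O(p)$ boundary error, as carried out in \eqref{eq: long ugly summation for periodic}. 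The grouped sums are then absolutely convergent by \eqref{eq: JP periodic}, so they contribute a uniform constant per pair $(x,y)$; multiplication by $|x-y|$ from each of the two applications yields $|x-y|^2$. Finally, the outer sums over $i_1$ and $i_2$ produce a factor of order $n$ via (i), giving the desired bound.

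With the fourth moment inequality in hand, the rest is a direct transcription of Sections \ref{sub: proof of the main theorem} and \ref{sec:Borodin's-Theorem.}. Chebyshev's inequality gives the analogue of Corollary \ref{cor:Tightness of Local Times}; tightness of $\{t_n\}$ in $D$ follows exactly as in Proposition \ref{pro:The-sequence is tight} (condition $2(i)$ from the tightness of $W_n$ and condition $2(ii)$ from the Chebyshev estimate combined with Billingsley's maximal inequality); and Proposition \ref{pro:Identification ot the limit} identifies $t_\sigma$ as the unique distributional limit, since its proof uses only the weak invariance principle $W_n \overset{d}{\Rightarrow} W_\sigma$ and continuity of occupation measures in the Skorokhod topology, which are available here as well.

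To produce the common probability space and prove the three assertions, I would then form $\xi_n=(W_n,l_n(t_1,\cdot),l_n(t_2,\cdot),\ldots)$ for a countable dense set $\{t_i\}\subset[0,\infty)$, obtain tightness of $\{\xi_n\}$ in $D_+\times D^{\NN}$ by Tychonoff's theorem, characterize the limit by applying the continuous functionals $g_\alpha$ as in Section \ref{sec:Borodin's-Theorem.}, and invoke Skorokhod's representation theorem to realize the convergence almost surely. The uniform convergence on $[0,T]\times\RR$ required in assertion $(3)$ then follows from the monotonicity-in-$t$ argument of inequalities \eqref{eq:Loc.Time.1}--\eqref{eq:Loc.Time.3}, using a.s.\ continuity of Brownian local time. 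The only real obstacle is the bookkeeping around the periodicity in step (iii); the operator-theoretic apparatus around $Q(t)$ is avoided entirely because the i.i.d.\ case provides direct Fourier estimates through \cite{JP}.
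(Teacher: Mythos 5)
Your proposal reproduces the paper's own argument for this theorem almost verbatim: normalize to $\Sigma=\ZZ$, replace Lemma \ref{lem:LLT} and Lemma \ref{pro:Potential Kernel Estimate} by Lemma 2 and the corollary to Lemma 7 of \cite{JP}, group the two ``difference'' sums in \eqref{eq: F for random walks} into blocks of length $p$ with an $O(p)$ boundary error as in \eqref{eq: long ugly summation for periodic}, and then transcribe Sections \ref{sub: proof of the main theorem} and \ref{sec:Borodin's-Theorem.} unchanged. The only cosmetic inaccuracy is your claim that the two inner sums needing the periodic estimate are always over $i_3$ and $i_4$ -- this holds for the first summand of $F$ but for the second they are over $i_2$ and $i_4$ -- and the $O(p)$ error term is harmlessly absorbed since $|x-y|\ge 1$ for distinct integers; neither affects the argument.
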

\begin{enumerate}
\item \textit{$W_{n}\overset{d}{=}W'_{n};\, W\overset{d}{=}W'$.}
\item \textit{With probability one $W'_{n}$ converges to $W'$ uniformly
on compact subsets of $\left[0,\infty\right)$.}
\item \textit{For every $\epsilon,T>0$ the processes $l'_{n}\left(t,x\right)$
and $l'\left(t,x\right)$ defined with respect to $W'_{n}$ and $W'_{\sigma}$
satisfy the relationship:
\[
\lim_{n\rightarrow\infty}\PP'\left(\sup_{t\in\left[0,T\right],\, x\in\RR}\left|l'_{n}\left(t,x\right)-l'\left(t,x\right)\right|>\epsilon\right)=0.
\]
}
\end{enumerate}

\subsection{\label{sub:The-periodic-case}The periodic case of finite state Markov
chains }

To drop the assumption of strong aperiodicity when $X_{1},..,X_{n},...$
is a Markov chain we use the dynamical setting introduced in section
1. Let $P:S\times S\to[0,1]$ be the transition matrix, $\mu$ be
an initial distribution on $S$ and $P^{\mu}$ the probability measure
on $S^{\mathbb{N}}$ generated by $\mu$ and $P$. We then look at
the Markov shift $\left(S^{\mathbb{N}},\BB.P^{\mu},T\right)$ where
$T$ is the shift and $\BB$ is the Borel $\sigma$-algebra generated
by the cylinder sets 
\[
\left[s_{0}s_{1}...s_{n}\right]=\left\{ \omega\in S^{\NN}:\,\omega_{i}=s_{i},\, i=0,1,...,n\right\} .
\]

In this case $X_{n}=f\left(T^{n}\omega\right)=\omega_{n}$ and if
the function $f$ is periodic there exists a solution to 
\[
e^{itf\left(\omega\right)}=\lambda\frac{\varphi\left(\omega\right)}{\varphi\left(T\omega\right)}
\]
where either $t\notin2\pi\ZZ$ or $\varphi$ is not constant. To make
the same analysis as for random walks we need to exclude the case
$\varphi\neq const$ (then summation over the Markov Chain will yield
the same periodic structure as in the i.i.d case). A sufficient condition
for that is that the system $\left(X,\BB,P_{\mu},T,\alpha\right)$
is almost onto with respect to the partition $\alpha=\left\{ \left[s_{0}\right]:\ s_{0}\in S\right\} $,
meaning that for every $a,b\in\alpha$ there exist sets $a_{0},...,a_{n}$
such that $a_{0}=b$, $a_{n}=c$ and $Ta_{k}\cap Ta_{k+1}\neq\emptyset$
(for details see \cite[Section 3]{AD}).

\section{\label{sec:Applications-to-complexity}Applications to complexity
of random walks in random sceneries with a Markov chain base}

A Random Walk in Random Scenery is a skew product probability preserving
transformation which is defined as follows:

The \textit{random scenery} is an invertible probability preserving
transformation $\left(Y,\mathcal{C},\nu,S\right)$ and the \textit{random
walk in random scenery $\left(Y,\mathcal{C},\nu,S\right)$ with jump
random variable $\xi$ }(assumed $\ZZ$-valued) is the skew product
$\left(Z,\BB(Z),m,T\right)$ defined by 
\[
Z:=\Omega\times Y,\ m:=\mu_{\xi}\times\nu,\ T(w,y)=\left(Rw,S^{w_{0}}y\right)
\]
where 
\[
\left(\Omega,\BB\left(\Omega\right),\mu_{\xi},R\right)=\left(\ZZ^{\ZZ},\BB\left(\ZZ^{\ZZ}\right),\prod{\rm dist}\xi,{\rm shift}\right)
\]
 is the shift of the independent jump random variables. 

Aaronson \cite{Aar}, assuming $\xi$ is in the domain of attraction
of an $\alpha-$stable law with $\alpha>0$, has studied the relative
complexity and the relative entropy dimension of $T$ over the base.
We refer the reader to \cite{Aar} for the definitions. 

One can ask if Aaronson's result \cite[Theorem 3, Section 1]{Aar}
remain true if we change the base $\left(\Omega,\BB\left(\Omega\right),\mu_{\xi},R\right)$
to a strongly aperiodic, irreducible, finite state Markov Shift $\left(\Omega,\BB\left(\Omega\right),P^{\mu},R\right)$. 

With Theorem \ref{thm: Borodin for MC} at hand, Aaronson's proof,
which can even be simplified since the Brownian motion is a.s continuous,
can be carried out verbatim.

\section{Appendix}
\begin{proposition}
\label{Trivial proposition}Let $\left(X,\mathcal{B},\mu\right)$
be a probability space, $f:X\rightarrow\mathbb{S}^{1}$(the unit sphere)
measurable. Then 
\[
\left|\int f\left(x\right)\, d\mu\right|=1
\]
implies $f=const$. \\
\end{proposition}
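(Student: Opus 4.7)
The plan is to exploit the strict convexity of the unit disk: a convex combination of points on the unit circle has modulus one only if all the points coincide. I would first reduce to the case $\int f\, d\mu = 1$ by replacing $f$ with $e^{-i\theta}f$, where $e^{i\theta}=\int f\, d\mu/|\int f\, d\mu|$. This reduction changes neither the hypothesis $|f|\equiv1$ nor the conclusion that $f$ is (almost everywhere) constant.

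Next, I would write $g:=e^{-i\theta}f = u+iv$ with $u,v$ real and measurable, so that $u^2+v^2=1$ a.e. The hypothesis becomes $\int u\, d\mu = 1$ and $\int v\, d\mu = 0$. Since $u\le\sqrt{u^2+v^2}=1$ a.e., the nonnegative function $1-u$ satisfies $\int(1-u)\, d\mu=0$, and therefore $u=1$ $\mu$-a.e. Then $v^2=1-u^2=0$ a.e., so $g\equiv1$ a.e., and hence $f\equiv e^{i\theta}$ a.e., which is the desired conclusion.

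There is no real obstacle here; the argument is a direct application of the fact that a nonnegative integrable function with integral zero vanishes almost everywhere. The only subtlety worth noting is that \emph{constant} is meant in the a.e.\ sense, which is the natural interpretation in the measure-theoretic context in which the proposition is invoked (in the proof of Lemma \ref{lem:Aperiodicity}, where $X=S$ is finite and $\mu=\mathbb{P}^{x}$, "a.e." amounts to "for all $y$ with $\mathbb{P}^{x}(y)>0$", which is exactly what is needed there).
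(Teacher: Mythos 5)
Your proof is correct, and it takes a genuinely different route from the paper's. The paper argues by contraposition: assuming $f$ is not a.e.\ constant, it selects two disjoint arcs $I_1,I_2\subseteq\mathbb{T}$, each receiving positive $\mu\circ f^{-1}$-mass, splits $\int f\,d\mu$ over $f^{-1}(I_1)$, $f^{-1}(I_2)$, and the remainder, and then invokes strict convexity of the closed unit disk (the average of two distinct points of $\mathbb{S}^1$ has modulus $<1$) to force $|\int f\,d\mu|<1$. Your argument is direct: after rotating so that $\int f\,d\mu=1$, the pointwise bound $\Re f\le|f|=1$ together with $\int\Re f\,d\mu=1$ reduces the whole proposition to the elementary fact that a nonnegative integrable function with zero integral vanishes a.e., from which $\Im f=0$ a.e.\ follows at once. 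Your version is shorter, avoids the partition-into-arcs construction and the ensuing case analysis, and makes the appeal to strict convexity implicit (in $\Re f\le |f|$ with equality iff $\Im f=0$) rather than explicit; the paper's version is more overtly geometric. Both yield only a.e.\ constancy, which — as you correctly note — is exactly what is needed where the proposition is applied in Lemma \ref{lem:Aperiodicity}.
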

\begin{proof}
Assume that $f$ is not constant. Then there are two disjoint intervals
$I_{1},I_{2}\subseteq\mathbb{T}$ such that $\mu\left(f^{-1}\left(I_{1}\right)\right)>0$
and $\mu\left(f^{-1}\left(I_{1}\right)\right)>0$. It follows that
\[
\left|\int_{X}f\left(x\right)\, d\mu\right|\leq\left|\int_{f^{-1}(I_{1})}f(x)d\mu+\int_{f^{-1}(I_{2})}f(x)d\mu\right|+\mu\left(X\backslash\left(f^{-1}(I_{1})\cup f^{-1}(I_{2})\right)\right)<1.
\]
This is true since if for $j\in\{1,2\}$
\[
\left|\frac{1}{\mu\left(f^{-1}(I_{j})\right)}\int_{f^{-1}(I_{j})}f(x)d\mu\right|<1
\]
then it is trivial, else 
\[
\frac{1}{\mu\left(f^{-1}(I_{j})\right)}\int_{f^{-1}(I_{j})}f(x)d\mu\in I_{j},\ j=1,2
\]
 and the mean of two different values on the unit sphere is of modulus
strictly less than $1$ .\end{proof}


\begin{thebibliography}{10}
\bibitem[1]{Aar}J.Aaronson, Relative Complexity of random walks in
random sceneries. To appear in Annals of Probability. 

\bibitem[2]{AD}J. Aaronson and M. Denker. Local limit theorems for
partial sums of stationary sequences generated by Gibbs\textendash{}Markov
maps. Stoch.Dyn. 1(2),193\textendash{}237 (2001).

\bibitem[3]{Bil}Billingsley, P. Convergence of Probability Measures,
second edition, Wiley, New York (1999).

\bibitem[4]{Bor}Borodin, A.N. On the asymptotic behavior of local
times of recurrent random walks with finite variance. Theory Probab.
Appl. 26, pp. 758\textendash{}772 (1981).

\bibitem[5]{Ch}K.L. Chung, (2nd ed.),Markov Chains with Stationary
Transition Probabilities, Springer, Berlin (1967).

\bibitem[6]{GH}Y. Guivarc'h, J. Hardy, Theoremes limites pour une
classe de chaines de Markov et applications aux diffeomorphismes d'Anosov.
Ann. Inst. H. Poincare 24 , 73-98 (1988).

\bibitem[7]{HeH}Hennion H., and Hervé, L. Limit theorems for Markov
chains and stochastic properties of dynamical systems by quasi-compactness.
Lecture Notes in Mathematics, 1766. Springer-Verlag, Berlin (2001). 

\bibitem[8]{JP}Jain N., Pruitt W. Asymptotic behavior of the local
time of a recurrent random walk. Ann. Probab. Volume 12, Number 1,
64-85 (1984).

\bibitem[9]{Ka}Kato, T. Perturbation Theory for Linear Operators.
Springer-Verlag, Berlin (1976).

\bibitem[10]{KS}Kesten, H. and Spitzer, F. A limit theorem related
to a new class of self similar processes. Z. Wahrsch. Verw. Gebiete
50, no.1, pp. 5-25 (1979).

\bibitem[11]{MP}Mörters, P., Peres, Y.: Brownian Motion. Cambridge
University Press, Cambridge (2009).

\bibitem[12]{R-E}J. Rousseau-Egele, Un théorème de la limite locale
pour une classe de transformations dilatantes et monotones par morceaux,
Ann. Probab. 11 (1983), pp. 772\textendash{}788.\end{thebibliography}
\end{document}